\theoremstyle{plain}
\newtheorem{thm}{Theorem}
\newtheorem*{thm*}{Theorem}
\newtheorem{cor}{Corollary}
\newtheorem{prop}{Proposition}
\newtheorem{lem}{Lemma}
\newtheorem*{lem*}{Lemma}
\newtheorem{defi}{Definition}
\newtheorem*{defi*}{Definition}
\newcommand\numberthis{\addtocounter{equation}{1}\tag{\theequation}}
\title{Benign overfitting and adaptive nonparametric regression}
\author[1]{Julien Chhor}
\author[1]{Suzanne Sigalla}
\author[1]{Alexandre B. Tsybakov}
\affil[1]{CREST-ENSAE}
\date{}
\def\matR{\mathbf{R}}
\def\matE{\mathbf{E}}
\def\matN{\mathbf{N}}
\def\matP{\mathbf{P}}
\def\lmin{\lambda_{\min}}
\newcommand{\norm}[1]{\left\|#1\right\|_{*}}
\newcommand{\Bd}{\mathcal{B}_d}
\newcommand{\betamax}{\beta_{\max}}
\newcommand{\newpar}{\par\medskip}
\begin{document}
\maketitle

\vspace{-15mm}

\begin{center}
    Contact: julien.chhor@ensae.fr, suzanne.sigalla@ensae.fr, alexandre.tsybakov@ensae.fr
\end{center}

\vspace{1mm}

\begin{abstract}
    In the nonparametric regression setting, we construct an estimator which is a continuous function interpolating the data points with high probability, while attaining minimax optimal rates under mean squared risk on the scale of Hölder classes adaptively to the unknown smoothness.
\end{abstract}

\vspace{3mm}

\noindent \textbf{Keywords:} Nonparametric regression, Benign overfitting, Local polynomial estimators, Adaptive estimator, Singular kernel, Interpolation, Aggregation.

\section{Introduction}

Benign overfitting has attracted a great deal of attention in the recent years. It was initially motivated by the fact that deep neural networks have good predictive properties even when perfectly interpolating the training data~\cite{belkin2019reconciling}, \cite{belkin2018understand}, \cite{zhang2021understanding}, \cite{belkin2021fit}. 
Such a behavior stands in strong contrast with the classical point of view that perfectly fitting the data points is not compatible with predicting well. With the aim of understanding this new phenomenon, a series of recent papers studied benign overfitting in linear regression setting, see ~\cite{bartlett2020benign}, \cite{tsigler2020benign}, \cite{chinot2020robustness},  \cite{muthukumar2020harmless}, \cite{bartlett2021failures}, \cite{lecue2022} and the references therein. 
The main conclusion for the linear model is that an unbalanced spectrum of the design matrix and over-parametrization, which in a sense approaches the model to non-parametric setting, are essential for benign overfitting to occur in linear regression. 
Extensions to kernel ridgeless regression were considered  in \cite{liang2020just} when the sample size $n$ and the dimension $d$ were assumed to satisfy $n \asymp d$, and in ~\cite{liang2020multiple} for a more general case $d \asymp n^\alpha$ for $\alpha \in (0,1)$. These papers give data-dependent upper bounds on the risk that can be small assuming favorable spectral properties of the data and the kernel matrix. On the other hand, if $d$ is constant (independent of $n$) then the least-norm interpolating estimator with respect to the Laplace kernel is inconsistent~\cite{rakhlin2019consistency}. 

In the line of work cited above, benign overfitting was understood as achieving simultaneously interpolation and prediction consistency, or possibly, consistency with some suboptimal rates. On the other hand, it was shown that, in non-parametric regression setting,  interpolating estimators can attain minimax optimal rates \cite{belkin2018does}. Namely, it is proved in \cite{belkin2018does} that interpolation with minimax optimal rates can be achieved by Nadaraya-Watson estimator with a singular kernel. 

The idea of using singular kernels can be traced back to  \cite{shepard1968} giving start to popular techniques in image processing referred to as Shepard interpolation. In statistical language, Shepard interpolant is nothing else but the Nadaraya-Watson estimator with kernel $K(u)=1/\Vert u\Vert^2$, where $\Vert \cdot\Vert$ denotes the Euclidean norm and $u\in\matR^2$. Unaware of Shepard's work and its subsequent extensive use in
image processing, \cite{devroye1998hilbert} considered the same 
estimator in general dimension $d$, that is, with the kernel $K(u)=\Vert u\Vert^{-d}$ for $u\in\matR^d$, and proved that the Nadaraya-Watson estimator with such a kernel is consistent in probability but fails to be pointwise almost surely consistent. However, this kernel is not integrable and has a peculiar property  that the bandwidth cancels out from the definition of the estimator. Thus, the bias cannot be controlled and the bias-variance trade-off argument based on bandwidth selection does not apply. It remains unclear whether some rates of convergence can be achieved by such an estimator. Therefore, it was suggested in \cite{belkin2018does,belkin2018does-bis} to localize and modify the kernel as $K (u) =\Vert u\Vert^{-a} \mathbf{1}(\Vert u\Vert\le 1)$ where 
$0 < a < d/2$ rather than $a = d$ and $\mathbf{1}(\cdot)$ denotes the indicator function. The estimator with such a weaker type of singularity is also interpolating, and it was shown in \cite{belkin2018does,belkin2018does-bis} that it achieves the minimax rates of convergence on the $\beta$-Hölder classes with $0<\beta\le 2$. Also, \cite{belkin2018overfitting} proved a similar claim for the $k$ nearest neighbor analog of this 
estimator with $0<\beta\le 1$. However, those results were restricted to functions with low smoothness $\beta$ and the suggested estimators were not adaptive to $\beta$. 

In this paper, we show that:
\vspace{-2mm}
\begin{itemize}
    \item[(i)] interpolating estimators attaining minimax optimal rates on $\beta$-Hölder classes can be obtained for any smoothness $\beta>0$,
    \vspace{-2mm}
    \item[(ii)] estimators with such properties can be constructed adaptively to the unknown smoothness $\beta \in (0,\betamax]$, for any $\betamax>0$, and to the unknown parameter $L>0$ of the Hölder class of regression functions.
\end{itemize}
\vspace{-2mm}
The estimators that we consider to achieve (i) are local polynomial estimators (LPE) with singular kernels.
In order to obtain adaptive estimators achieving (ii), we  apply aggregation techniques to a family of LPE with singular kernels. 

As a by-product, we obtain non-asymptotic bounds for the squared risk of LPE in classical setting with non-singular kernels. To the best of our knowledge, such bounds are missing in the existing literature on LPE that was mainly focused on asymptotic properties such as convergence in probability or pointwise asymptotic normality, cf. \cite{stone1980,stone1982optimal,tsybakov1986,fan-gijbels}.

Note that local polynomial method with singular kernels has been used as interpolation tool in numerical analysis,  starting from \cite{lancaster}. It was also invoked in the context of non-parametric regression in \cite{katkovnik_book}. However, \cite{lancaster,katkovnik_book} only discussed functional properties, such as the smoothness of interpolants, rather than their statistical behavior.

\section{Preliminaries}

\subsection{Notation}

For any vector $x=(x_1,\dots,x_d)\in \matR^d$ and any  multi-index $s=(s_1,\dots,s_d) \in \matN^d$, we define
\vspace{-7mm}

\begin{align*}
\begin{array}{lll}
    \rvert s \rvert = \sum\limits_{i=1}^ds_i, &&s!=s_1!\dots s_d!\\
    &&\\
     x^s=x_1^{s_1}\dots x_d^{s_d} && D^s= \frac{\partial^{s_1+\dots+s_d}}{\partial x_1^{s_1}\dots \partial x_d^{s_d}}.
\end{array}
\end{align*}
We denote by $\|\cdot\|$ the Euclidean norm, and by ${\rm Card}(J)$ the cardinality of set $J$. For any integer $k\in \matN^*$, we set $[k] = \{1,\dots,k\}$. For any $x \in \matR^d$, $r>0$, we denote by $\mathcal{B}_d(x,r)$ the closed Euclidean ball centered at $x$ with radius $r$. We set for brevity $\Bd = \mathcal{B}_d(0,1)$.
For any $\beta >0$, we denote by $\lfloor \beta \rfloor$ the maximal integer less than $\beta$, and by $\lceil \beta \rceil$ the minimal integer greater than $\beta$. We use symbols $C,C'$ to denote positive constants that can vary from line to line.

For any $k>0$, we denote by $I_k$ the identity matrix of size $k$. For any square matrix $M$, the writing $M \succ 0$ means that $M$ is positive definite. For any matrix $M$, we denote by $M^+$ its Moore-Penrose inverse, and by $\| M \|_{\infty}$ its spectral norm.

\subsection{Model}

Let 
$(X,Y)$ be a pair of random variables in $\matR^d\times \matR$ 
with distribution $P_{XY}$ and assume that we are given $n$ i.i.d. observations $\mathcal{D}: = \left\{(X_1, Y_1),\dots,(X_n,Y_n)\right\}$ with distribution $P_{XY}$.
We denote by $P_X$ the marginal distribution of $X$ and assume that it admits a density $p$ with respect to the Lebesgue measure on the compact set $\text{Supp}(p)$.
We assume that for all $x \in \text{Supp}(p)$, the regression function $f(x) = \matE(Y \rvert X=x)$ exists and is finite. Set $\xi(X)=Y-\matE(Y \rvert X)$. Equivalently, the model can be written as $Y_i = f(X_i) + \xi(X_i)$, where $\matE (\xi(X_i)|X_i)=0$. We make the following assumptions.

\vspace{2mm}

{\bf Assumption $\mathbf{(A1)}$.} {\it $\matE (|\xi(X)|^{2+\delta}|X=x)\le C_{\xi} $ for all $x \in \text{Supp}(p)$, where $\delta$ and $C_{\xi}$ are positive constants. }

\textbf{Assumption $\mathbf{(A2)}$.} {\it  The random vector $X$ is distributed with Lebesgue density $p(\cdot)$ such that $p \in [p_{\min}, p_{\max}]$ where $p_{\max}\geq p_{\min}>0$.
The support $\text{Supp}(p)$ of $p$ is a convex compact set contained in $\Bd$.}

\vspace{2mm}

For any estimator $f_n$ of $f$ based on the sample $\mathcal{D}$,
 we consider the following $L_2$-loss :
\begin{align*}
    \|f_n - f\|^2_{L_2} = \matE_X\left(\left[f_n(X)-f(X)\right]^2\right)= \int \left[f_n(x)-f(x)\right]^2 p(x) dx,
\end{align*}
where $\matE_X$ denotes the expectation with respect to $P_X$.
We define the expected risk as
$
    \matE \left[\|f_n - f\|^2_{L_2}\right],
$
where $\matE$ denotes the expectation with respect to the distribution of $\mathcal{D}$.
\begin{defi}[Interpolating estimator]
An estimator $f_n$ of $f$ based on a sample $\mathcal{D}=\{(X_1,Y_1),\dots,(X_n,Y_n)\}$ is called interpolating over 
$\mathcal{D}$ if 
 $f_n(X_i)=Y_i$ for $i=1,\dots,n$.
\end{defi}

\subsection{Hölder classes of functions}\label{subsec:modif_Holder}

For any $k$-linear form $A:(\matR^d)^k \longrightarrow \matR$, we define its norm as follows
\begin{equation}\label{def:norm}
    \norm{A} := \sup \Big\{\big|A[h_1,\dots,h_k]\big|: \|h_j\|\leq 1, j \in [k]\Big\}.
\end{equation}
Given a $k$-times continuously differentiable function $f:\matR^d \longrightarrow \matR$ and $x \in \matR^d$, we denote by $f^{(k)}(x):(\matR^d)^k \longrightarrow \matR$ the following $k$-linear form
\begin{align*}
    f^{(k)}(x) [h_1,\dots,h_k] = \hspace{-3mm} \sum_{|m_j|=1, \forall j \in [k]} \hspace{-3mm} D^{m_1+\dots + m_k}f(x) h_1^{m_1} \dots h_k^{m_k}, ~~ \forall h_1,\dots, h_k \in \matR^d,
\end{align*}
where $m_1,\dots, m_k \in \matN^d$ are multi-indices.
Throughout the paper, we will consider the following H\"older class of functions. 
\begin{defi}\label{def:Holder}
Let $\beta>0$, $L >0$, and let $f: \Bd \longrightarrow \matR$ be a $\ell=\lfloor \beta \rfloor$ times continuously differentiable function. We denote by $\Sigma(\beta,L)$ the set of all functions $f$ defined on $\Bd$ such that 
\begin{align*}
    \max_{0\le k \leq \ell} \sup_{x\in \Bd}  \norm{f^{(k)}(x)} + \sup_{x,x'\in \Bd} \frac{\norm{f^{(\ell)}(x) -f^{(\ell)}(x')}}{\|x-x'\|^{\beta - \ell}} \leq L.
\end{align*}
\end{defi}

These classes of functions have nice embedding properties that will be needed to prove our result on adaptive estimation. For $\beta' \leq \beta \leq 1$, we clearly have $\Sigma(\beta,L) \subseteq \Sigma(\beta', L)$. Analogous embedding is valid for $\beta>1$ as stated in the next lemma proved in the Appendix.

\begin{lem}\label{lem:Holder_nested}
For any $0 < \beta' \leq \beta$ and $L>0$ we have $\Sigma(\beta,L) \subseteq \Sigma(\beta', 2L)$.
\end{lem}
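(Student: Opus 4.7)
The plan is to proceed by a case analysis depending on whether $\ell := \lfloor \beta \rfloor$ equals $\ell' := \lfloor \beta' \rfloor$ or $\ell' < \ell$. Throughout, let $M := \max_{0 \le k \le \ell} \sup_{x \in \Bd} \norm{f^{(k)}(x)}$ and let $H$ denote the $(\beta-\ell)$-H\"older seminorm of $f^{(\ell)}$, so that $M + H \le L$ by assumption. Since $\beta' \le \beta$, we always have $\ell' \le \ell$, and the two cases correspond respectively to $\beta$ and $\beta'$ lying in the same interval $[\ell, \ell+1)$ or straddling at least one integer.

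In the first case ($\ell' = \ell$), the max-of-sup-norms part of the two H\"older norms coincides and equals $M$, so I only need to bound the new H\"older seminorm of exponent $\beta' - \ell$ by the given one of larger exponent $\beta - \ell$. The key identity is
\begin{align*}
\frac{\norm{f^{(\ell)}(x) - f^{(\ell)}(x')}}{\|x - x'\|^{\beta' - \ell}} \;=\; \|x - x'\|^{\beta - \beta'} \cdot \frac{\norm{f^{(\ell)}(x) - f^{(\ell)}(x')}}{\|x - x'\|^{\beta - \ell}}.
\end{align*}
Using that $\|x - x'\| \le \mathrm{diam}(\Bd) = 2$ together with $\beta - \beta' \in [0, 1)$, the first factor is at most $2$, so that the new seminorm is bounded by $2H$. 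The total norm is then $M + 2H \le (M + H) + H \le 2L$.

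In the second case ($\ell' < \ell$), the max-over-derivatives part is automatically bounded by $M \le L$, and the remaining task is to control the new H\"older seminorm of order $\beta' - \ell'$ for $f^{(\ell')}$. I would invoke the multivariate mean value theorem applied to $f^{(\ell')}$: since $\ell' + 1 \le \ell$, one has $\sup_x \norm{f^{(\ell' + 1)}(x)} \le M$, which yields $\norm{f^{(\ell')}(x) - f^{(\ell')}(x')} \le M \|x - x'\|$ by integrating along the segment joining $x$ and $x'$ (which is contained in the convex set $\Bd$). Dividing by $\|x - x'\|^{\beta' - \ell'}$ gives a ratio bounded by $M \|x - x'\|^{1 - (\beta' - \ell')}$, which is controlled on $\Bd$. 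The principal technical difficulty is the careful bookkeeping of the domain-diameter factors $2^{\beta - \beta'}$ and $2^{1 - (\beta' - \ell')}$ that appear when $\|x - x'\|$ approaches $\mathrm{diam}(\Bd) = 2$; these factors must be absorbed via the joint constraint $M + H \le L$ in order to recover the claimed constant $2$ on the right-hand side.
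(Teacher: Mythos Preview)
Your case split and the use of the mean value inequality in Case~2 are exactly the ideas the paper uses, and your Case~1 argument is complete and correct (indeed more careful than the paper's, which asserts the same-floor embedding with constant~$1$ rather than~$2$). The only unfinished part of your proposal is the ``bookkeeping'' in Case~2, where you say the diameter factor $2^{1-(\beta'-\ell')}$ ``must be absorbed via the joint constraint $M+H\le L$.''

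That absorption is in fact impossible: the constant~$2$ in the lemma is not attainable on $\Bd$ because $\mathrm{diam}(\Bd)=2$. Take $d=1$, $f(x)=x$ on $[-1,1]$, $\beta=2$, and $\beta'\in(0,1)$. Then $\lfloor\beta\rfloor=1$, $\|f\|_\infty=\|f'\|_\infty=1$, and the Lipschitz seminorm of $f'$ vanishes, so $f\in\Sigma(2,1)$. But $\lfloor\beta'\rfloor=0$ and
\[
\|f\|_\infty+\sup_{x\ne x'}\frac{|x-x'|}{|x-x'|^{\beta'}}=1+2^{1-\beta'}>2,
\]
so $f\notin\Sigma(\beta',2)$. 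The same example with $\beta=1$, $\beta'=1/2$ shows that the paper's intermediate claim \eqref{eq:betabeta1}, namely $\Sigma(\beta,L)\subseteq\Sigma(\beta',L)$ when $\lfloor\beta\rfloor=\lfloor\beta'\rfloor$, is also false; their two-step route through the integer class $\Sigma(\ell+1,2L)$ inherits the same defect. Your direct argument in Case~2 gives $M'+H'\le M+2^{1-(\beta'-\ell')}M<3M\le 3L$, and this bound is sharp in view of the counterexample. Since Lemma~\ref{lem:Holder_nested} is only used in the proof of Theorem~\ref{thm:thm3} to transfer a rate from $\Sigma(\beta_j,cL)$ to $\Sigma(\beta,L)$, any absolute constant in place of~$2$ suffices; replacing~$2L$ by~$3L$ repairs both your proof and the paper's.
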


The class $\Sigma(\beta,L)$ is closely related to several differently defined Hölder classes used in the literature. One of them is based on Taylor approximation, cf., for example, \cite{stone1980}.
For any $x \in \matR^d$ and any $\ell$ times continuously differentiable real-valued function $f$ on $\matR^d$, we denote by $Tf_x$ its Taylor polynomial of degree $\ell$ at point $x$:
\begin{align*}
    Tf_x(x') = \sum_{0\le \rvert s \rvert \leq \ell}\frac{(x-x')^s}{s!}D^s f(x').
\end{align*}

\begin{lem}\label{lem:polynomial}
Let $\beta >0$, $L>0$ and $f \in \Sigma(\beta,L)$. Then for all $x,y \in \Bd$, and $\ell =\lfloor \beta \rfloor$ it holds that
\begin{align*}
    \left|f(x) - Tf_y(x)\right| \leq \frac{L}{\ell !} \|x\!-\!y\|^\beta.
\end{align*}
\end{lem}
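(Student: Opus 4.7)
The plan is to prove the standard Taylor remainder estimate along the segment from $y$ to $x$, which lies in $\Bd$ by convexity of the unit ball. Set $\ell=\lfloor\beta\rfloor$, $h=x-y$, and $g(t)=f(y+th)$ for $t\in[0,1]$. Iterated chain rule together with the multinomial identity
$$f^{(k)}(z)[h,\dots,h]\;=\;\sum_{|s|=k}\frac{k!}{s!}\,D^sf(z)\,h^s$$
yields $g^{(k)}(t)=f^{(k)}(y+th)[h,\dots,h]$ and, evaluated at $t=0$, identifies the degree-$\ell$ polynomial $\sum_{k=0}^\ell g^{(k)}(0)/k!$ with $Tf_y(x)$, so that bounding the remainder of $g$ about $0$ at $1$ will directly give the claim.

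For $\ell\ge 1$, I would apply the univariate integral form of Taylor's remainder to $g$ and subtract the degree-$\ell$ term at $t=0$, using $\int_0^1(1-t)^{\ell-1}\,dt=1/\ell$, to rewrite
$$f(x)-Tf_y(x)\;=\;\frac{1}{(\ell-1)!}\int_0^1(1-t)^{\ell-1}\bigl(f^{(\ell)}(y+th)-f^{(\ell)}(y)\bigr)[h,\dots,h]\,dt.$$
Passing to absolute values and using the definition \eqref{def:norm} of $\norm{\cdot}$ to get $\bigl|A[h,\dots,h]\bigr|\le\norm{A}\,\|h\|^\ell$, combined with the H\"older property from Definition~\ref{def:Holder} applied to $f^{(\ell)}$, which gives $\norm{f^{(\ell)}(y+th)-f^{(\ell)}(y)}\le L\,t^{\beta-\ell}\|h\|^{\beta-\ell}$, yields
$$\bigl|f(x)-Tf_y(x)\bigr|\;\le\;\frac{L\,\|x-y\|^\beta}{(\ell-1)!}\int_0^1(1-t)^{\ell-1}t^{\beta-\ell}\,dt.$$

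To conclude, I would identify the remaining integral as the beta function, using
$$B(\beta-\ell+1,\ell)\;=\;\frac{\Gamma(\beta-\ell+1)\,\Gamma(\ell)}{\Gamma(\beta+1)}\;=\;\frac{(\ell-1)!}{\beta(\beta-1)\cdots(\beta-\ell+1)},$$
so that the upper bound simplifies to $L\,\|x-y\|^\beta/\bigl[\beta(\beta-1)\cdots(\beta-\ell+1)\bigr]$. The paper's convention $\ell=\lfloor\beta\rfloor<\beta$ (strict integer part) gives $\beta-j>\ell-j$ for $0\le j\le \ell-1$, whence the denominator strictly exceeds $\ell!$ and the claimed bound follows. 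The boundary case $\ell=0$ (i.e., $\beta\in(0,1]$) is handled directly, since then $Tf_y(x)=f(y)$ and the H\"older condition on $f$ itself yields $|f(x)-f(y)|\le L\|x-y\|^\beta$. No step presents a real obstacle; the main nuisance is the bookkeeping between the $k$-linear-form notation of Definition~\ref{def:Holder} and the multi-index Taylor expansion, together with the verification that the beta-function constant is dominated by $1/\ell!$.
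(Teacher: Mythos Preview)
Your argument is correct and follows essentially the same route as the paper: reduce to the one-variable function along the segment, express the remainder via the difference $f^{(\ell)}(y+th)-f^{(\ell)}(y)$ applied to $h^{\otimes\ell}$, and invoke the H\"older condition on $f^{(\ell)}$. The only technical difference is that the paper uses the Lagrange (mean-value) form of the remainder, which produces a single intermediate point $c\in(0,1)$ and immediately yields the constant $L/\ell!$ after bounding $c^{\beta-\ell}\le 1$, whereas you use the integral form and pass through the beta function to obtain the sharper constant $L/[\beta(\beta-1)\cdots(\beta-\ell+1)]$ before majorizing it by $L/\ell!$. Both arguments rely on the paper's strict convention $\ell=\lfloor\beta\rfloor<\beta$, which you correctly identify and use.
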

Thus, we have $\Sigma(\beta,L) \subseteq \Sigma'(\beta,L/\lfloor \beta \rfloor !)$, where $\Sigma'(\beta,L')$ stands for the class of all functions $f$ satisfying the relation $
    \left|f(x) - Tf_y(x)\right| \leq L' \|x\!-\!y\|^\beta.
$

Next, considering one more definition of Hölder class:
\begin{align*}
    \widetilde \Sigma(\beta,L)  = \left\{f : \Bd \rightarrow \matR : \sup_{x,x'} \frac{\norm{f^{(\ell)}(x) -f^{(\ell)}(x')}}{\|x-x'\|^{\beta - \ell}} \leq L\right\}
\end{align*}
we also immediately have that $\Sigma(\beta,L) \subseteq \widetilde \Sigma(\beta,L)$. It follows from~\cite{stone1982optimal} that the minimax estimation rate on the class $\widetilde \Sigma(\beta,L)$ under the squared loss that we consider below is $n^{-\frac{2\beta}{2\beta+d}}$ up to constants depending only on $\beta$ and $d$. 
Notice that the functions in $\widetilde \Sigma(\beta,L)$  used in the lower bound construction in~\cite{stone1982optimal} can be rescaled into functions in $\Sigma(\beta,L)$ by multiplying by a factor depending only on $\beta$ and $d$.
Hence, the lower bound construction in~\cite{stone1982optimal} remains valid for the class $\Sigma(\beta,L)$. It implies that the minimax rate of estimation on the class $\Sigma(\beta,L)$ is  $n^{-\frac{2\beta}{2\beta+d}}$. 
In conclusion, though $\Sigma(\beta, L)$ is a subclass of  suitable Hölder classes $\Sigma'$ and $\widetilde \Sigma$ it is not substantially smaller, in the sense that estimation over these classes is essentially equally difficult.

\section{Local polynomial estimators and interpolation}


For $\ell \in \matN$ let $C_{\ell,d} = {{\ell + d} \choose {d}}$ be the cardinality of the set of multi-indices $\{ s=(s_1,\dots,s_d)\in\matN^d, 0\le \rvert s \rvert \leq \ell\}$. 
%
We assume that the elements $s^{(1)},\dots,s^{(C_{\ell,d})}$ of this set are ordered according to the increasing values of $|s|$, and in an arbitrary way for equal values of $|s|$. In particular, $s^{(1)}=(0,\dots,0)$.
For any $u \in \matR^d$, define the vector $U(u)\in \matR^{C_{\ell,d}}$ as follows:
\begin{align*}
    U(u) := \left( \frac{u^s}{s!} \right)_{\rvert s \rvert \leq \ell},
\end{align*}  
where the components of $U(u)$ are ordered in the same way as $s^{(i)}$'s. In particular, the first component of $U(u)$ is 1 for any $u$.

The definition of local polynomial estimator usually given in the literature is as follows, cf., e.g., \cite{tsybakov2008introduction}.
Let $K:\matR^d \rightarrow \matR_+$ be a kernel, $h>0$ be a bandwith and $\ell \geq0$ be an integer. Consider a vector $\hat{\theta}_n(x) \in \matR^{C_{\ell,d}}$ such that 
\begin{align}
\label{eq:eq1}
    \hat{\theta}_n(x) \in \underset{\theta \in \matR^{C_{\ell,d}}}{\rm argmin} \ \sum_{i=1}^n \left[Y_i - \theta^{\top} U\left(\frac{X_i-x}{h} \right) \right]^2 K \left(\frac{X_i-x}{h} \right)
\end{align}
Then 
\begin{align}
\label{eq:eq4a}
    f_n(x) = U^\top (0) \hat{\theta}_n(x)
\end{align}
is called a local polynomial estimator of order $\ell$ 
of $f(x)$. Note that $ f_n(x)$ is the first component of $\hat{\theta}_n(x)$.

However, this definition is not convenient for our purposes. First, 
$\hat{\theta}_n(x)$ is not uniquely defined for such $x\in \matR^d$ that the matrix
\begin{align*}
    B_{nx}: = \frac{1}{n{ h^d}} \sum_{i=1}^n U\left( \frac{X_i-x}{h}\right) U^{\top }\left( \frac{X_i-x}{h}\right) K\left( \frac{X_i-x}{h}\right)\in \matR^{C_{\ell,d}\times C_{\ell,d}}
\end{align*}
is degenerate. Furthermore, $\hat{\theta}_n(x)$ is not defined for $x=X_i$ if the kernel $K$ has a singularity at 0, which will be the main case of interest in what follows. Therefore, we adopt the following slightly different definition. 

\begin{defi}[Local polynomial estimator]
\label{def:LocPolEst}
If the kernel $K$ is bounded then the local polynomial estimator of order $\ell$ (or shortly, LP($\ell$) estimator) of $f(x)$ at point $x$ is defined as
\begin{align}
\label{eq:eq4}
    f_n(x)= \sum_{i=1}^n Y_i W_{ni}(x),
\end{align}
where, for $i=1,\dots,n$, the weights $W_{ni}(x)$ are given by
\begin{align}
\label{eq:eq4b}
W_{ni}(x) = \frac{U^{\top}(0)}{n{h^d}} B_{nx}^{+} U\left( \frac{X_i-x}{h}\right) K\left( \frac{X_i-x}{h}\right).
\end{align}
If the kernel $K$ has a singularity at $0$, that is, $\lim_{u \to 0} K(u) = +\infty$, then the LP($\ell$) estimator of $f(x)$ at point $x\notin \{X_1,\dots, X_n\}$ is still defined by \eqref{eq:eq4} 
while we set, for $j=1,\dots,n$,
\begin{align}
\label{eq:eq4c}
f_{n}(X_j) = \limsup\limits_{z \to X_j} f_{n}(z).
\end{align}
\end{defi}
 The purpose of \eqref{eq:eq4c} is to provide a valid definition for kernels with singularity at 0.  
We introduce $\limsup$ in \eqref{eq:eq4c} for formal reasons. In the cases of our interest described in the next lemma there exists an exact limit in \eqref{eq:eq4c}: $\lim_{x\to X_j}f_n(x)=Y_j$ for all $j \in [n]$, which means that the estimator $f_n$ is interpolating.
\begin{lem}\label{lem:interpolation}[Interpolation property of LPE] Let $f_n$ be an LP($\ell$) estimator with kernel
$K:\matR^d\to \matR_+$ having a singularity at $0$, that is, $\lim_{u \to 0} K(u) = +\infty$, and continuous on $\matR^d \setminus \{0\}$. In particular, there exist $c_0>0$ and $\Delta>0$ such that
\begin{align}\label{k}
    K(u) \geq c_0 \mathbf{1}(\|u\| \leq \Delta), \quad \forall u \in \matR^d.
\end{align}
Assume that $X_1,\dots, X_n$ are distinct points in $\matR^d$ and there exists a constant $\lambda_1>0$ such that  
\begin{align}\label{pos-def}
     \sum_{j=1}^n U\left( \frac{X_j-x}{h}\right) U^{\top }\left( \frac{X_j-x}{h}\right)\mathbf{1}\left(\Big\|\frac{X_j-x}{h}\Big\|\le \Delta\right) \succ \lambda_1 I_{C_{\ell,d}}
\end{align}
for all $x$ in some neighborhood of $X_i$, where $I_{C_{\ell,d}}$ denotes the identity matrix. Then  $f_n(X_i)=Y_i$.
\end{lem}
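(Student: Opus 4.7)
The plan is to show that $\lim_{x\to X_i} f_n(x) = Y_i$ through $x\ne X_i$, from which the claim follows by the definition in \eqref{eq:eq4c}. The key insight is twofold: for $x$ close to but distinct from $X_i$, the assumptions \eqref{k}--\eqref{pos-def} make $B_{nx}$ invertible, so $\hat{\theta}_n(x)$ is the unique minimizer of the weighted least-squares loss $\mathcal{L}_x(\theta) = \sum_{j=1}^n [Y_j - \theta^\top U(u_j)]^2 K(u_j)$, where $u_j := (X_j - x)/h$; and comparison with a carefully chosen test vector $\theta^* = (Y_i, 0, \ldots, 0)^\top$ bounds $\mathcal{L}_x(\hat{\theta}_n(x))$ uniformly, forcing $\hat{\theta}_n(x)^\top U(u_i) \to Y_i$ because $K(u_i) \to \infty$.

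First I would combine \eqref{k} with \eqref{pos-def} and the positive semidefiniteness of $U(u_j)U^\top(u_j)$ to obtain
$$B_{nx} \succeq \frac{c_0}{nh^d}\sum_{j=1}^n U(u_j) U^\top(u_j) \mathbf{1}\bigl(\|u_j\|\le\Delta\bigr) \succeq \frac{c_0\lambda_1}{nh^d}\, I_{C_{\ell,d}},$$
so the Moore--Penrose pseudoinverse coincides with the usual inverse and $\hat{\theta}_n(x)$ is uniquely determined by the normal equations. Next, since the first coordinate of $U(u)$ equals $1$ for every $u$, the test parameter $\theta^* = (Y_i, 0, \ldots, 0)^\top$ satisfies $(\theta^*)^\top U(u_j) = Y_i$, giving $\mathcal{L}_x(\theta^*) = \sum_{j\ne i}(Y_j - Y_i)^2 K(u_j)$. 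Since the $X_j$ are distinct, $u_j$ stays bounded away from $0$ for $j\ne i$ as $x\to X_i$; continuity of $K$ off the origin then yields a uniform bound $\mathcal{L}_x(\hat{\theta}_n(x))\le \mathcal{L}_x(\theta^*) \le C$ in a small enough neighborhood of $X_i$.

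Extracting the $j=i$ term from this loss bound gives $|\hat{\theta}_n(x)^\top U(u_i) - Y_i| \le \sqrt{C/K(u_i)}\to 0$. Writing
$$f_n(x) - Y_i = \hat{\theta}_n(x)^\top\bigl(U(0)-U(u_i)\bigr) + \bigl(\hat{\theta}_n(x)^\top U(u_i) - Y_i\bigr),$$
and noting that all components of $U$ other than the first are polynomials vanishing at $0$, so $\|U(0)-U(u_i)\|\to 0$, it remains only to show that $\|\hat{\theta}_n(x)\|$ stays bounded as $x\to X_i$. For this I would exploit \eqref{pos-def} once more: for each $j$ with $\|u_j\|\le\Delta$ the loss bound yields $|\hat{\theta}_n(x)^\top U(u_j)| \le |Y_j| + \sqrt{C/c_0}$, and
$$\lambda_1 \|\hat{\theta}_n(x)\|^2 \le \hat{\theta}_n(x)^\top\Bigl(\sum_{\|u_j\|\le \Delta} U(u_j)U^\top(u_j)\Bigr)\hat{\theta}_n(x) = \sum_{\|u_j\|\le\Delta}\bigl(\hat{\theta}_n(x)^\top U(u_j)\bigr)^2$$
is uniformly bounded for $x$ near $X_i$. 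Combining these ingredients yields $f_n(x)\to Y_i$ and hence $f_n(X_i)=Y_i$.

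The main obstacle is the uniform control of $\|\hat{\theta}_n(x)\|$: the representation $\hat{\theta}_n(x) = B_{nx}^{-1} b_{nx}$ is unhelpful because $B_{nx}$ and $b_{nx}:=\frac{1}{nh^d}\sum_j Y_j U(u_j) K(u_j)$ blow up simultaneously along the one-dimensional subspace spanned by $U(u_i)$ as $x\to X_i$, and disentangling cancellations by direct spectral analysis is awkward. The trick is to bypass the explicit formula altogether and read off $\|\hat{\theta}_n(x)\|$ indirectly from the pos-def assumption together with the variational bound on $\mathcal{L}_x$, which simultaneously constrains $\hat{\theta}_n(x)^\top U(u_j)$ for every $j$ with $\|u_j\|\le \Delta$.
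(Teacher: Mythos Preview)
Your proposal is correct and follows essentially the same route as the paper's proof: both use the test vector $\theta^*=(Y_i,0,\dots,0)^\top$ to bound the weighted least-squares loss, extract from this that $\hat\theta_n(x)^\top U(u_i)\to Y_i$ via the singularity of $K$, and control $\|\hat\theta_n(x)\|$ through the positive-definiteness assumption \eqref{pos-def} applied to the quantities $\hat\theta_n(x)^\top U(u_j)$. Your argument is in fact slightly more direct than the paper's, which reaches the same conclusions via a contradiction argument rather than the explicit uniform bound $\mathcal{L}_x(\hat\theta_n(x))\le C$.
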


For $\ell=0$ (corresponding to the Nadaraya-Watson estimator) condition \eqref{pos-def} is trivially satisfied since the expression on the left hand side is a positive scalar for any $x$ in a neighborhood of $X_i$. For general $\ell$, this condition 
is satisfied with high probability if $X_j$'s are distributed with a density bounded away from zero on its support. Indeed, we have the following result. For $\Delta>0$ consider the matrix
\begin{align*}
    \overline B_{nx}: = \frac{1}{n{ h^d}} \sum_{i=1}^n U\left( \frac{X_i-x}{h}\right) U^{\top }\left( \frac{X_i-x}{h}\right)\mathbf{1}\left(\Big\|\frac{X_i-x}{h}\Big\|\le \Delta\right)\in \matR^{C_{\ell,d}\times C_{\ell,d}}.
\end{align*}
\begin{lem}
\label{lem:lem1}
Let $h \le \alpha$, where $\alpha>0$. 
Let Assumption $(A2)$ be satisfied. 
Then, the following holds.

(i) For any $\Delta>0$ there exist constants $\lambda_0(\ell) >0$, $c>0$ independent of $n$ and $x$ and depending only on $\ell, \alpha, \Delta, d, p(\cdot)$ such that
\begin{align*}
    \matP\Big(\inf_{x \in  {\rm Supp}(p)}\lambda_{\min}(\overline B_{nx}) \geq \lambda_0(\ell)\Big) \geq 1 - c(h^{-d^2-d} e^{- nh^d/c} + e^{-n^3h^{2d}/c}),
\end{align*}
where $\lambda_{\min}(\overline B_{nx})$ is the minimal eigenvalue of $\overline B_{nx}$. Moreover, $\lambda_0(\ell)\ge \lambda_0(\ell')$ if $\ell \le \ell'$.

(ii) If $K$ is a kernel 
satisfying \eqref{k}  then there exist constants $\lambda_0'(\ell) >0$, $c'>0$ independent of $n$ and $x$ and depending only on $\ell, \alpha, \Delta, d, p(\cdot)$ such that
\begin{align*}
    \matP\Big(\inf_{x \in  {\rm Supp}(p)}\lambda_{\min}(B_{nx})
    \geq \lambda_0'(\ell)\Big)\geq 1 - c'(h^{-d^2-d} e^{- nh^d/c'} + e^{-n^3h^{2d}/c'}).
\end{align*}
\end{lem}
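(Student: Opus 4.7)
My plan has four steps: (1) show the expected matrix $\matE[\overline B_{nx}]$ has minimum eigenvalue bounded below by a constant $\lambda^*>0$ uniformly in $x\in\mathrm{Supp}(p)$; (2) apply matrix Bernstein at a fixed $x$; (3) uniformize via an $\varepsilon$-net of $\mathrm{Supp}(p)$ and a careful oscillation control; (4) deduce part~(ii) from part~(i) by Loewner domination.

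For step~(1), the change of variables $u=(y-x)/h$ gives
\[
    \matE[\overline B_{nx}]=\int_{\mathcal{B}_d(0,\Delta)} U(u)U^{\top}(u)\,p(x+hu)\,\mathbf{1}\big(x+hu\in\mathrm{Supp}(p)\big)\,du.
\]
For any unit vector $v$, the quadratic form is the integral of $(v^{\top}U(u))^2\,p(x+hu)$ over this set. Since $\mathrm{Supp}(p)$ is convex and full-dimensional, at every $x$ the tangent cone has solid angle bounded below, so the integration region contains a subset of Lebesgue measure bounded below uniformly in $x$ and $h\in(0,\alpha]$; combined with $p\ge p_{\min}$ and the fact that $v^{\top}U(\cdot)$ is a non-zero polynomial whenever $v\ne 0$, this yields the uniform lower bound $\lambda^*$. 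For step~(2), writing $\overline B_{nx}=\sum_{i=1}^n Z_i(x)$ with $\|Z_i(x)\|_\infty\le M/(nh^d)$ and matrix variance $\|\sum_i\matE[Z_i(x)^2]\|_\infty=O(1/(nh^d))$ (since each indicator has expectation $O(h^d)$), matrix Bernstein delivers
\[
    \matP\!\left(\lmin(\overline B_{nx})<\lambda^*/2\right)\le 2C_{\ell,d}\exp(-c_1 nh^d).
\]

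For step~(3), cover $\mathrm{Supp}(p)\subseteq\Bd$ by an $\varepsilon$-net $\mathcal{N}$ with $\varepsilon=c_2 h^{d+1}$, so $|\mathcal{N}|\le C_3\, h^{-d^2-d}$; union-bounding step~(2) over $\mathcal{N}$ yields the first summand $h^{-d^2-d}e^{-nh^d/c}$. For arbitrary $x$, pick the nearest $x_0\in\mathcal{N}$ and split $\overline B_{nx}-\overline B_{nx_0}=R_1+R_2$, where $R_1$ collects those $X_i$ whose indicators $\mathbf{1}(\|(X_i-\cdot)/h\|\le\Delta)$ coincide at $x$ and $x_0$, and $R_2$ collects the $X_i$ in the symmetric difference of the two balls $\mathcal{B}_d(x,h\Delta)$ and $\mathcal{B}_d(x_0,h\Delta)$. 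The Lipschitz continuity of $u\mapsto U(u)U^{\top}(u)$ on $\mathcal{B}_d(0,\Delta+1)$ gives $\|R_1\|_\infty\le C\varepsilon/h=O(h^d)$ deterministically; for $R_2$, the shell has volume $O(h^{d-1}\varepsilon)=O(h^{2d})$ and each summand has spectral norm $O(1/(nh^d))$, so $\|R_2\|_\infty\le CN/(nh^d)$ where $N$ is the number of sample points in the shell. A Bernstein tail on the binomial-type variable $N$ (mean $O(nh^{2d})$) together with a union bound over $\mathcal{N}$ produces the second summand $e^{-n^3h^{2d}/c}$, completing part~(i).

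For step~(4), the pointwise lower bound $K(u)\ge c_0\mathbf{1}(\|u\|\le\Delta)$ gives $B_{nx}\succeq c_0\overline B_{nx}$ in Loewner order, so $\lmin(B_{nx})\ge c_0\lmin(\overline B_{nx})$ and part~(ii) follows with $\lambda_0'(\ell)=c_0\lambda_0(\ell)$. The monotonicity claim $\lambda_0(\ell)\ge \lambda_0(\ell')$ for $\ell\le\ell'$ is a consequence of Cauchy's interlacing theorem, since $\overline B_{nx}$ for order $\ell$ is a principal submatrix of that for order $\ell'$. The main technical obstacle lies in step~(3): one must calibrate $\varepsilon$ finely enough that the deterministic oscillation of $R_1$ is negligible while keeping the union-bound factor $h^{-d^2-d}$ compatible with the Bernstein concentration, and simultaneously obtain a sufficiently sharp uniform tail for the random count $N$ governing $R_2$ to recover the second summand of the stated probability bound.
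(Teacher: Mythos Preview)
Your four-step plan mirrors the paper's proof closely: the paper also lower-bounds the population eigenvalue uniformly via a compactness argument, applies Bernstein at each fixed $x$ (entrywise scalar rather than matrix, but this is immaterial), uniformizes through an $\varepsilon$-net with $\varepsilon\asymp h^{d+1}$ (whence the factor $h^{-d^2-d}$), and deduces (ii) from $B_{nx}\succeq c_0\overline B_{nx}$; the monotonicity in $\ell$ is obtained exactly as you say, via principal submatrices.

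There is, however, a real gap in your treatment of $R_2$. The symmetric difference $\mathcal{B}_d(x,h\Delta)\triangle\mathcal{B}_d(x_0,h\Delta)$ depends on the \emph{continuous} variable $x$, not only on the net point $x_0$, so a union bound over $\mathcal{N}$ alone does not control $\sup_x N$; as written you are union-bounding over an uncountable family. The paper resolves this by invoking the Vapnik--Chervonenkis inequality (balls in $\matR^d$ have VC dimension $\le d+2$, and intersections of two balls have VC dimension $O(d)$) to get uniform concentration over all such regions simultaneously. Your Bernstein route can be salvaged by observing that every relevant symmetric difference lies inside the single annulus $\{z:\big|\,\|z-x_0\|-h\Delta\,\big|\le\varepsilon\}$, one per net point, after which a union bound over $\mathcal{N}$ is legitimate. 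But the exponent you announce does not follow: with mean $O(nh^{2d})$ and threshold $O(nh^d)$, Bernstein gives $\matP(N>cnh^d)\le\exp(-c'nh^d)$, not $\exp(-c'n^3h^{2d})$, so after the union bound the $R_2$ contribution has the same shape $h^{-d^2-d}e^{-nh^d/c}$ as the first summand rather than the stated second one (this is still sufficient for all downstream uses). A smaller slip: your deterministic bound on $R_1$ should be $\|R_1\|_\infty\le C\varepsilon/h^{d+1}=O(1)$, not $C\varepsilon/h=O(h^d)$, since up to $n$ summands of size $C\varepsilon/(nh^{d+1})$ may contribute; choosing $c_2$ small enough still makes this less than $\lambda^*/6$, so the argument survives.
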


Note that part (ii) of Lemma \ref{lem:lem1} is an immediate consequence of its part (i) and the fact that $B_{nx} \succ c_0 \overline B_{nx}$ if \eqref{k} holds.
Also, the next corollary follows immediately from Lemmas \ref{lem:interpolation} and \ref{lem:lem1}. 
\begin{cor}\label{cor:interpolation}
Let $f_n$ be an LP($\ell$) with kernel $K:\matR^d\to \matR_+$  having a singularity at $0$, that is, $\lim_{u \to 0} K(u) = +\infty$, and continuous on $\matR^d \setminus \{0\}$. Let $h = \alpha n^{-\frac{1}{2\beta+{d}}}$, where $\alpha,\beta>0$ and let Assumption $(A2)$ be satisfied. 
Then, there exists a constant $c'>0$ such that, with probability at least $1 - c' e^{- A_n/c'}$, where $A_n=n^{\frac{2\beta}{2\beta+d}}$, the LPE $f_n$ is interpolating, that is, $f_n(X_i)=Y_i$ for $i=1,\dots,n$, and $f_n(\cdot)$ is a continuous function on ${\rm Supp}(p)$. Furthermore, the LP($0$) estimator is interpolating with probability 1.
\end{cor}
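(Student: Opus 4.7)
The plan is to read this off Lemma \ref{lem:interpolation} and Lemma \ref{lem:lem1}. The argument splits into a short probabilistic step (rewriting the bound of Lemma \ref{lem:lem1} under the chosen scaling) and a deterministic step applied on the resulting good event.

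For the probabilistic step, substitute $h=\alpha n^{-1/(2\beta+d)}$ into Lemma \ref{lem:lem1}. A direct computation gives $nh^d=\alpha^d A_n$ and $n^3h^{2d}=\alpha^{2d}n^{(6\beta+d)/(2\beta+d)}$, a power strictly larger than that defining $A_n$; the polynomial factor $h^{-d^2-d}$ is absorbable into the exponential after enlarging the constant. Hence $\inf_{x\in\mathrm{Supp}(p)}\lambda_{\min}(\overline B_{nx})\ge\lambda_0(\ell)$ holds with probability at least $1-c'e^{-A_n/c'}$ for a fresh $c'>0$. On this event the sum in \eqref{pos-def} equals $nh^d\overline B_{nx}\succeq nh^d\lambda_0(\ell) I_{C_{\ell,d}}$, so \eqref{pos-def} is satisfied with, say, $\lambda_1=nh^d\lambda_0(\ell)/2$ uniformly in $x\in\mathrm{Supp}(p)$, in particular in some neighborhood of each $X_i$. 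Lemma \ref{lem:interpolation} then yields $f_n(X_i)=Y_i$ for every $i$.

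For continuity of $f_n$ on $\mathrm{Supp}(p)$ on the same event, observe first that for $x\notin\{X_1,\dots,X_n\}$ the argument $(X_i-x)/h$ is never zero, so $K\bigl((X_i-x)/h\bigr)U\bigl((X_i-x)/h\bigr)U^\top\bigl((X_i-x)/h\bigr)$ depends continuously on $x$, and hence so does $B_{nx}$. By Lemma \ref{lem:lem1}(ii), $B_{nx}$ is positive definite on the good event, so $B_{nx}^+=B_{nx}^{-1}$ is continuous in $x$, and every weight $W_{ni}$ is continuous on $\mathrm{Supp}(p)\setminus\{X_1,\dots,X_n\}$. Near a sample point $X_j$, the divergence $K\bigl((X_j-x)/h\bigr)\to\infty$ dominates all other kernel evaluations (which stay bounded, since the $X_i$ are distinct), which forces $W_{nj}(x)\to 1$ and $W_{ni}(x)\to 0$ for $i\ne j$; thus $\lim_{x\to X_j}f_n(x)=Y_j$, matching the $\limsup$ definition \eqref{eq:eq4c} and yielding continuity at $X_j$.

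The LP$(0)$ statement is in fact deterministic: with $\ell=0$ one has $C_{\ell,d}=1$ and $U\equiv 1$, so the sum in \eqref{pos-def} reduces to $\sum_j \mathbf{1}(\|(X_j-x)/h\|\le\Delta)$, which is at least $1$ in a neighborhood of each $X_i$ because the $j=i$ term equals $1$; hence \eqref{pos-def} holds with $\lambda_1=1/2$ with probability $1$ (the $X_j$ being distinct almost surely), and the continuity argument of the previous paragraph applies without invoking Lemma \ref{lem:lem1}. The main obstacle in the proof is really the bookkeeping of the probabilistic step, namely checking that both the polynomial prefactor $h^{-d^2-d}$ and the second exponential $e^{-n^3 h^{2d}/c}$ in Lemma \ref{lem:lem1}(i) are dominated by $e^{-A_n/c'}$ under a single fresh constant $c'$; the rest follows mechanically from continuity of $B_{nx}^{-1}$ and the singular behavior of $K$ at the origin.
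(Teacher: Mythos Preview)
Your proof is correct and follows exactly the route the paper intends: the corollary is stated there as an immediate consequence of Lemmas \ref{lem:interpolation} and \ref{lem:lem1}, and you carry out the bookkeeping (substituting $h=\alpha n^{-1/(2\beta+d)}$, absorbing the polynomial prefactor, checking that the $X_i$ are a.s.\ distinct under (A2), and invoking continuity of $B_{nx}^{-1}$ away from the sample points) in the natural way.

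One soft spot worth flagging: your argument for continuity \emph{at} the sample points, namely that $W_{nj}(x)\to 1$ and $W_{ni}(x)\to 0$ because ``the divergence dominates,'' is not a proof for $\ell\ge 1$. The diverging factor $K((X_j-x)/h)$ appears both in the numerator of $W_{nj}$ and inside $B_{nx}$, so $B_{nx}^{-1}$ degenerates in a rank-one direction and the cancellation has to be tracked (e.g.\ via Sherman--Morrison, which in turn requires the residual matrix built from the remaining points to be invertible---something not guaranteed by the good event of Lemma \ref{lem:lem1} alone). Fortunately you do not need this: the proof of Lemma \ref{lem:interpolation} already shows $\lim_{x\to X_j}f_n(x)=Y_j$ (as the paper remarks just before the lemma), and combined with the definition \eqref{eq:eq4c} this gives continuity at $X_j$ directly. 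So you can simply cite Lemma \ref{lem:interpolation} for that step as well and drop the heuristic about the weights.
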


Note that the kernels  $K(u)=\|u\|^{-a}\mathds{1}(\|u\|\leq 1)$ with $a \in (0, {d}/{2})$ considered in \cite{belkin2018does-bis,belkin2018does} are not continuous on $\matR^d \setminus \{0\}$ and thus do not satisfy the conditions of Lemma \ref{lem:interpolation} and Corollary \ref{cor:interpolation}. On the other hand, these conditions are met for the kernels
$K(u)=\|u\|^{-a}\cos^2(\pi \|u\|/2 )\mathds{1}(\|u\|\leq 1)$ or $K(u)=\|u\|^{-a}(1-\|u\|)_+$ with $a>0$.

\section{Minimax optimal interpolating estimator}

 In this section, we show that for any $\beta>0$, one can construct an interpolating local polynomial estimator reaching the minimax rate $n^{-\frac{2\beta}{2\beta+d}}$ on the Hölder class $\Sigma(\beta,L)$. 

In what follows, we assume that we know a constant $L_0$ such that $|f(x)|\le L_0$ for all $x\in \text{Supp}(p)$. We denote the class of all such functions $f$ by $\mathcal{F}_0$. This assumption is not crucial and can be avoided at the expense of slightly more involved dependence of the result on the noise distribution (see Remark 1 below).  

Let ${f_n}$ be an LP($\ell$) estimator of order $\ell = \lfloor \beta \rfloor$.  Set $\mu := L_0 \lor \max_{1\le i\le n} |Y_i|$  and consider the truncated estimator
\begin{equation}\label{def:truncated_estim}
    \bar f_n(x) = \big[{f_n}(x)\big]_{-\mu}^{\mu},
\end{equation}
where for all $y\in \matR$ and $a \leq b$ the truncation of $y$ between $a$ and $b$ is defined as $[y]_a^b := (y \lor a) \land b$.

\begin{thm}
\label{thm:thm1}
 Let Assumptions (A1)  and (A2) be satisfied. 
Let $f \in \Sigma(\beta,L)$ for $\beta>0, L>0$, and
$|f(x)|\le L_0$ for all $x\in {\rm Supp}(p)$ and a constant $L_0>0$. Consider the estimator $\bar f_n$ defined in \eqref{def:truncated_estim}, where ${f_n}$ is the LP($\ell$) estimator with $\ell = \lfloor \beta \rfloor$,  $h = \alpha n^{-\frac{1}{2\beta+{d}}}$, for some $\alpha>0$, and kernel $K$. 

(i) If $K$ is a compactly supported kernel satisfying \eqref{k}  and  
$\int K^2(u) du <  \infty$ then
\begin{align}
     \matE \left([\bar f_n(x)-f(x)]^2 \right) &\leq C n^{-\frac{2\beta}{2\beta +{ d}}},\quad \forall x \in {\rm Supp}(p), \label{eq:eq10}\\
    \matE\left(\|\bar f_n-f\|_{L_2}^2 \right) &\leq C n^{-\frac{2\beta}{2\beta +{ d}}},\label{eq:eq11}
\end{align}
where $C>0$ is a constant 
depending only on $\beta,L,L_0,d,C_{\xi}, K,p_{\max}, p_{\min}$ and $\alpha$.

(ii)
If, in addition, $\lim_{u \to 0} K(u) = +\infty$ and $K$ is continuous on $\matR^d \setminus \{0\}$,  
then there exists a constant $c'>0$ such that, with probability at least $1 - c' e^{- A_n/c'}$, where $A_n=n^{\frac{2\beta}{2\beta+d}}$, the estimator $\bar f_n$ is interpolating, that is, $\bar f_n(X_i)=Y_i$ for $i=1,\dots,n$, and $\bar f_n(\cdot)$ is a continuous function on ${\rm Supp}(p)$.
\end{thm}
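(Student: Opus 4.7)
The plan is to combine a classical bias-variance decomposition on a high-probability ``good'' event where $B_{nx}$ is well-conditioned, with a crude bound on the complementary ``bad'' event that exploits the truncation level $\mu$.

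On the good event $\mathcal{E}_n=\{\inf_{x\in {\rm Supp}(p)} \lambda_{\min}(B_{nx})\geq \lambda_0'(\ell)\}$ supplied by Lemma \ref{lem:lem1}(ii), $f_n(x)$ is uniquely defined and I would write it via the normal equations as $f_n(x)-f(x)=U^{\top}(0)\,B_{nx}^{+}(a_{nx}-B_{nx}\theta_x^*)$, where $\theta_x^*=(h^{|s|}D^s f(x)/s!)_{|s|\le \ell}$ is the vector of scaled Taylor coefficients and $a_{nx}=\frac{1}{nh^d}\sum_i Y_i U(\tfrac{X_i-x}{h})K(\tfrac{X_i-x}{h})$. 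Because $\theta_x^{*\top}U(\tfrac{X_i-x}{h})=Tf_x(X_i)$, this splits the error into a bias term involving $f(X_i)-Tf_x(X_i)$ and a stochastic term involving $\xi_i$. For the bias, Lemma \ref{lem:polynomial} gives $|f(X_i)-Tf_x(X_i)|\leq (L/\ell!)\|X_i-x\|^\beta$; together with the compact support of $K$ (so that only $X_i$ with $\|X_i-x\|\leq Rh$ contribute), $\|B_{nx}^{+}\|_\infty\leq 1/\lambda_0'(\ell)$, and Assumption (A2), this yields a deterministic bias bound of order $h^\beta$. For the stochastic term, conditioning on the design and using Jensen plus Assumption (A1) to control $\mathbb{E}[\xi_i^2\,|\,X_i]$, the conditional variance on $\mathcal{E}_n$ is at most $\frac{C}{n^2 h^{2d}}\sum_i K^2(\tfrac{X_i-x}{h})$; taking expectation and invoking $\int K^2 <\infty$ together with $p\le p_{\max}$ gives a bound of order $1/(nh^d)$. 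Combining and choosing $h\asymp n^{-1/(2\beta+d)}$ gives $\mathbb{E}[(f_n(x)-f(x))^2\mathbf{1}_{\mathcal{E}_n}]\leq C n^{-2\beta/(2\beta+d)}$. Since $|f(x)|\leq L_0\leq \mu$, truncation can only decrease the pointwise squared error, so the same bound holds for $\bar f_n$.

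On $\mathcal{E}_n^c$ I would simply use $|\bar f_n(x)-f(x)|\leq 2\mu$ and H\"older's inequality:
\[
\mathbb{E}[(\bar f_n(x)-f(x))^2\mathbf{1}_{\mathcal{E}_n^c}]\le 4\bigl(\mathbb{E}\mu^{2+\delta}\bigr)^{2/(2+\delta)}\bigl(\mathbb{P}(\mathcal{E}_n^c)\bigr)^{\delta/(2+\delta)}.
\]
Because $|Y_i|\le L_0+|\xi_i|$ and $\mathbb{E}|\xi_i|^{2+\delta}\le C_\xi$, the moment $\mathbb{E}\mu^{2+\delta}$ is dominated by $\mathbb{E}\max_i|\xi_i|^{2+\delta}\le n\,C_\xi$, i.e.\ grows at most polynomially in $n$. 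Meanwhile Lemma \ref{lem:lem1} gives $\mathbb{P}(\mathcal{E}_n^c)$ at most $c'(h^{-d^2-d}e^{-nh^d/c'}+e^{-n^3h^{2d}/c'})$, which with $h\asymp n^{-1/(2\beta+d)}$ decays like $\exp(-c'' n^{2\beta/(2\beta+d)})$ and therefore kills any polynomial factor, making this contribution $o(n^{-2\beta/(2\beta+d)})$. This proves \eqref{eq:eq10}, and integrating against $p(x)\,dx\le p_{\max}dx$ over ${\rm Supp}(p)\subseteq \mathcal{B}_d$ (a fixed compact) gives \eqref{eq:eq11}.

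Part (ii) follows immediately from Corollary \ref{cor:interpolation}: under the additional assumptions on $K$, with probability at least $1-c'e^{-A_n/c'}$ the unbounded estimator $f_n$ is interpolating and continuous on ${\rm Supp}(p)$. By construction $|Y_i|\leq \mu$, so $\bar f_n(X_i)=[Y_i]_{-\mu}^{\mu}=Y_i$, and the truncation $y\mapsto [y]_{-\mu}^\mu$ is continuous, hence $\bar f_n$ inherits continuity.

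The main obstacle I anticipate is the bias argument on $\mathcal{E}_n$: one has to exploit the reproducing property of LPE for polynomials of degree $\le \ell$ carefully, and keep track of all the constants entering $\|U(\cdot)\|$ and $\|B_{nx}^{+}\|_\infty$ uniformly in $x$. The second, more delicate point is ensuring that the exponential probability in Lemma \ref{lem:lem1} beats the polynomial-in-$n$ growth of $\mathbb{E}\mu^{2+\delta}$; this works precisely because $nh^d$ grows like a positive power of $n$ under the chosen bandwidth.
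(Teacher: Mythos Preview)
Your proposal is correct and follows essentially the same route as the paper: the same good event from Lemma~\ref{lem:lem1}(ii), the same bias--variance split on $\mathcal{E}_n$ (the paper phrases the bias via the reproducing property of Proposition~\ref{prop:prop1} and the weights $W_{ni}$ rather than via the Taylor-coefficient vector $\theta_x^*$, but this is the same computation), the same H\"older argument with $\mathbb{E}\mu^{2+\delta}\lesssim n$ on $\mathcal{E}_n^c$, and the same appeal to Corollary~\ref{cor:interpolation} for part~(ii). The only point to tighten in writing it out is that on $\mathcal{E}_n$ the bias is still random (the event gives a \emph{lower} bound on $\lambda_{\min}(B_{nx})$, not an upper bound on $\tfrac{1}{nh^d}\sum_i K(\tfrac{X_i-x}{h})$), so the $h^\beta$ bound is obtained after taking expectation using (A2), exactly as the paper does, rather than deterministically.
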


Note that, for the examples of singular kernels given at the end of the previous section, we need $a \in (0, {d}/{2})$ to grant the condition $\int K^2(u) du <  \infty$ required in Theorem \ref{thm:thm1}. Moreover, Shepard kernel $K(u)=\|u\|^{-d}$ does not satisfy the assumptions of Theorem \ref{thm:thm1}.

{\bf Remark 1.}
The value $\max_{1\le i\le n}|Y_i|$ is introduced in the threshold $\mu$ only with the aim to preserve the interpolation property. Inspection of the proof shows that Theorem \ref{thm:thm1}(i) remains valid when $\max_{1\le i\le n}|Y_i|$ is dropped from the definition of $\mu$, so that $\mu=L_0$, but in this case data interpolation is not granted. 
On the other hand, by setting $\mu=2\max_{1\le i\le n}|Y_i|$ it is possible to obtain both items (i) and (ii) of Theorem \ref{thm:thm1} for an estimator that does not require the knowledge of $L_0$. We do not state this result here since we are able to prove it with the constant $C$ in \eqref{eq:eq10} - \eqref{eq:eq11} depending not only on $C_{\xi}$ but also on a tail property of the distribution of $\xi(X)$ given $X$. 

{\bf Remark 2.} Theorem \ref{thm:thm1}(i) completes the existing literature on LPE in the classical setting when the kernel is non-singular. To the best of our knowledge, non-asymptotic bounds on the mean squared error of LPE were not obtained. The previous work was mainly focused on asymptotic properties such as convergence in probability or pointwise asymptotic normality, cf. \cite{stone1980,stone1982optimal,tsybakov1986,fan-gijbels}. For binary $Y\in\{0,1\}$ specific to classification setting, non-asymptotic deviation bounds for LPE were obtained in \cite{audibert2007fast}. However, the techniques of \cite{audibert2007fast} cannot be extended beyond the case of bounded $Y$. 


{\bf Remark 3.} Inspection of the proof shows that Theorem \ref{thm:thm1} extends to kernels  
$K$ that are not necessarily compactly supported. It suffices to assume that the integrals $\int(1+\|u\|^{\beta})K(u)du$ and $\int(1+\|u\|^{2\beta})K^2(u)du$ are finite.

\section{Adaptive interpolating estimator}

In this section, we will use the following assumption on the noise $\xi(X)$.

{\bf Assumption $\mathbf{(A3)}$.} {\it Conditionally on $X=x$, the random variable $\xi(X)$ is a zero-mean $\sigma_\xi$-subgaussian random variable for all $x\in \text{Supp}(p)$.}

We propose an adaptive estimator that does not need the knowledge of $\beta,L,C_{\xi}$, achieves the minimax $L_2$-rate of convergence on classes $\Sigma(\beta,L)$ for all $L>0$ and $\beta \in (0,\betamax]$, where $\betamax>0$ is an arbitrary given value,
and is interpolating with high probability. 
Our adaptive estimator is based on least squares aggregation. We refer to \cite{wegkamp2003model} for the study of such aggregation procedures.

Assume without loss of generality that $n$ is even. We split the sample $\mathcal{D}=\{(X_1,Y_1),\dots,(X_n,Y_n)\}$ into two independent subsamples $\mathcal{D}_1 =\left\{ (X_1,Y_1),\dots,(X_{\frac{n}{2}},Y_{\frac{n}{2}})\right\}$ and $\mathcal{D}_2 = \left\{(X_{\frac{n}{2}+1},Y_{\frac{n}{2}+1}),\dots, (X_n,Y_n)\right\}$, and we proceed in two steps.
\begin{enumerate}
    \item 
    Choose a finite grid $(\beta_j)_{j \in J}$ on the values of $\beta$. Let $f_{n,j}$ denote a LP($\ell_j$) estimator (with $\ell_j=\lfloor \beta_j\rfloor$) based on the subsample $\mathcal{D}_1$ with bandwidth $h=\alpha n^{-\frac{1}{2\beta_j+d}}$, $\alpha>0$, and kernel $K$ satisfying the assumptions of Theorem \ref{thm:thm1}. Set $\mu := L_0 \lor \max_{1\le i\le n/2} |Y_i|$ and construct $|J|$ truncated local polynomial estimators:
\begin{equation}\label{def:estim}
    \bar f_{n,j}(x) = \big[f_{n,j}(x)\big]_{-\mu}^{\mu}, ~~~~ j \in J.
\end{equation}
By Theorem \ref{thm:thm1}, each estimator $\bar f_{n,j}$ is interpolating over $\mathcal{D}_1$ with high probability, and satisfies
    \begin{align}
    \label{eq:eq6}
        \sup_{f \in \Sigma(\beta_j,L)\cap \mathcal{F}_0}\matE_1
        \left[\|\bar f_{n,j}-f\|_{L_2}^2\right] \leq Cn^{-\frac{2\beta_j}{2\beta_j+d}},
    \end{align}
    where $\matE_1$ denotes the expectation with respect to the distribution of $\mathcal{D}_1$.
    \item 
    From the collection $(f_{n,j})_{j\in J}$, we select an estimator $\widetilde{f}_n$ that minimizes the sum of squares over the second subsample $\mathcal{D}_2$, that is, we set $\widetilde f_n = \bar f_{n,\widetilde j}$ with
    \begin{align*}
        \widetilde j \in \underset{j \in J}{\rm argmin} \sum_{k=\frac{n}{2}+1}^n \left( Y_k - \bar f_{n, j}(X_k) \right)^2. 
    \end{align*}
    
    \end{enumerate}
    
    As each of the estimators among $(\bar f_{n,j})_{j\in J}$ is interpolating over $\mathcal{D}_1$, the estimator $\widetilde f_n$  is also interpolating over $\mathcal{D}_1$, but not over $\mathcal{D}_2$. We therefore introduce the estimator {$\widetilde g
    _n$} obtained in the same way as {$\widetilde f_n$} by interchanging $\mathcal{D}_1$ and $\mathcal{D}_2$. Thus, {$\widetilde g_n$} is interpolating over $\mathcal{D}_2$. Next, we define an estimator  interpolating over $\mathcal{D}_1 \cup \mathcal{D}_2$ by combining {$\widetilde f_n$} and {$\widetilde g_n$} as follows. \newpar

For any $x \in \matR^d$ and any set $A\subseteq \matR^d$, denote by $d(x,A) = \inf_{y\in A}\|x-y\|$ the distance between $x$ and $A$. Let $\lambda : \matR^d \rightarrow [0,1]$ be any continuous function such that $\lambda(x) \to 0$ as $d(x,\mathcal{D}_2) \to 0$ and  $\lambda(x) \to 1$ as $d(x,\mathcal{D}_1) \to 0$. For example, take $\lambda(x) = \frac{2}{\pi} \arctan\left(\frac{d(x,\mathcal{D}_2)}{d(x,\mathcal{D}_1)}\right)$ with $\frac{1}{0} =\infty$ and $\arctan(\infty)=1$ by convention. We define our final estimator as
\begin{equation}\label{def:final_estimator}
    \hat {\sf f}_n(x) = \lambda(x) {\widetilde f_n(x)} + (1-\lambda(x)) {\widetilde g_n(x)}.
\end{equation}
\begin{thm}\label{thm:thm3}
Let $n\geq 3$, $\betamax>1$. Consider the grid points $\beta_j$ defined as follows: 
\begin{align*}
    \beta_j = \left(1+ \frac{1}{\log n} \right)^j, ~~ j=-M,\dots,M_{\max},
\end{align*}
where $M=2\left\lfloor \log (n) \log \log (n) \right\rfloor$ and $M_{\max} = M \land \lfloor \log(n) \log (\betamax)\rfloor$. Let Assumptions (A1) and (A3) be satisfied. If kernel $K$ satisfies the assumptions of Theorem \ref{thm:thm1}(i),
then for any $\beta\in (0,\betamax]$ and $L>0$ for the estimator $\hat {\sf f}_n$ defined by \eqref{def:final_estimator} we have 
\begin{align}\label{eq:thm2}
    \sup_{f \in \Sigma(\beta,L)\cap \mathcal{F}_0} \matE\left[\|\hat {\sf f}_n-f\|_{L_2}^2 \right] \leq C n^{-\frac{2\beta}{2\beta+d}},
\end{align}
where $C>0$ is a positive constant depending only on $\beta,L,L_0,d,\betamax,\sigma_\xi, K,$  $p_{\max}, p_{\min}$ and $\alpha$.

If, in addition, kernel $K$ satisfies the assumptions of Theorem \ref{thm:thm1}(ii), then the estimator $\hat {\sf f}_n$ is an interpolating continuous function with probability at least $1 - c''\exp(-n^{\frac{2}{2+d}}/c'')$, where $c''$ is a positive constant depending only on $L,L_0,d,\betamax, K,$ $p_{\max}, p_{\min}$ and $\alpha$. 
\end{thm}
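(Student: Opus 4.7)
The plan is to treat the two conclusions separately. For the risk bound, note that since $\lambda(x)\in[0,1]$, the convexity of $t\mapsto t^2$ gives the pointwise inequality
\[
(\hat{\sf f}_n(x)-f(x))^2 \le \lambda(x)(\widetilde f_n(x)-f(x))^2 + (1-\lambda(x))(\widetilde g_n(x)-f(x))^2,
\]
so after integrating against $p$ and taking expectation I obtain
\(
\mathbb{E}\|\hat{\sf f}_n-f\|_{L_2}^2\le \mathbb{E}\|\widetilde f_n-f\|_{L_2}^2+\mathbb{E}\|\widetilde g_n-f\|_{L_2}^2,
\)
and by symmetry it suffices to control one summand, say $\widetilde f_n$.

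Next I would condition on $\mathcal{D}_1$, so that the family $(\bar f_{n,j})_{j\in J}$ is deterministic and bounded by $\mu$, and run a least-squares oracle inequality in the spirit of \cite{wegkamp2003model} on the independent sample $\mathcal{D}_2$: using that the residuals $(Y_k-\bar f_{n,j}(X_k))^2$ have bounded conditional mean and the noise $\xi$ is $\sigma_\xi$-subgaussian, a standard concentration/peeling argument yields
\[
\matE_2\bigl[\|\widetilde f_n-f\|_{L_2}^2\,\big|\,\mathcal{D}_1\bigr] \le \bigl(1+\tfrac{1}{\log n}\bigr)\min_{j\in J}\|\bar f_{n,j}-f\|_{L_2}^2 + C\,\frac{(\mu^2+\sigma_\xi^2)\log|J|}{n}.
\]
Taking expectation over $\mathcal{D}_1$ and using $\matE[\mu^2]\le C(L_0^2+\sigma_\xi^2\log n)$ (subgaussianity plus a simple maximal inequality), together with $|J|\le C\log n\log\log n$, the remainder term is $O(\log n\cdot\log\log n/n)$, which is $o(n^{-2\beta/(2\beta+d)})$ for every $\beta>0$. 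It remains to pick an oracle element: for $n$ large enough so that $\beta_{-M}\le\beta$, choose $j^\star$ with $\beta_{j^\star}\le\beta<\beta_{j^\star}(1+1/\log n)$; Lemma~\ref{lem:Holder_nested} gives $f\in\Sigma(\beta_{j^\star},2L)$, Theorem~\ref{thm:thm1}(i) yields $\matE_1\|\bar f_{n,j^\star}-f\|_{L_2}^2\le Cn^{-2\beta_{j^\star}/(2\beta_{j^\star}+d)}$, and the elementary identity $\tfrac{2\beta_{j^\star}}{2\beta_{j^\star}+d}-\tfrac{2\beta}{2\beta+d}=\tfrac{2d(\beta_{j^\star}-\beta)}{(2\beta_{j^\star}+d)(2\beta+d)}\ge-\tfrac{C}{\log n}$ converts this to $Cn^{-2\beta/(2\beta+d)}$, since $n^{C/\log n}=e^C$. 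For small $n$ the constant $C$ (allowed to depend on $\beta$) absorbs the trivial bound.

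For the interpolation claim, I would split $J$ into $\{j:\beta_j<1\}$ and $\{j:\beta_j\ge 1\}$. For the first set $\ell_j=0$ so $\bar f_{n,j}$ is a truncated Nadaraya--Watson estimator with singular kernel, which by the last assertion of Corollary~\ref{cor:interpolation} interpolates $\mathcal{D}_1$ with probability one. For $j$ with $\beta_j\ge 1$ the bandwidth $h_j=\alpha(n/2)^{-1/(2\beta_j+d)}$ satisfies $A_{n,j}:=(n/2)h_j^d\ge c\,n^{2/(2+d)}$, so Theorem~\ref{thm:thm1}(ii) gives an interpolation-and-continuity event of probability $\ge 1-c'e^{-A_{n,j}/c'}$; a union bound over the at most $M_{\max}+1\le C\log n$ relevant indices, and the same argument applied to $\widetilde g_n$ on $\mathcal{D}_2$, together produce probability at least $1-c''\exp(-n^{2/(2+d)}/c'')$. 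On this event all $\bar f_{n,j}$ are continuous, $\widetilde f_n$ interpolates $\mathcal{D}_1$ and $\widetilde g_n$ interpolates $\mathcal{D}_2$; since $\lambda(X_i)=1$ for $X_i\in\mathcal{D}_1$ and $\lambda(X_i)=0$ for $X_i\in\mathcal{D}_2$, and $\lambda$ is continuous, $\hat{\sf f}_n$ is continuous and interpolates $\mathcal{D}_1\cup\mathcal{D}_2$.

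The main obstacle is the oracle inequality in the second step: the truncation level $\mu$ is random and only sub-gaussianly controlled, and the estimators themselves are not uniformly bounded by a deterministic constant, so the standard aggregation machinery of~\cite{wegkamp2003model} must be combined with a high-probability control of $\mu$ (or a direct Bernstein-type argument on the truncated loss) to produce a clean oracle inequality whose remainder is truly $o(n^{-2\beta/(2\beta+d)})$; the approximation-step and the interpolation-step are then comparatively routine consequences of Lemma~\ref{lem:Holder_nested}, Theorem~\ref{thm:thm1}, and the deliberate design of $\lambda$.
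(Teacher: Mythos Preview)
Your proposal is correct and follows essentially the same route as the paper: condition on $\mathcal{D}_1$, apply the Wegkamp least-squares aggregation oracle inequality with the (conditionally deterministic) bound $B=L_0+\mu$, take expectation over $\mathcal{D}_1$ using the subgaussian maximal inequality $\matE_1[\mu^2]\le C\log n$, pick the oracle grid point via Lemma~\ref{lem:Holder_nested} and Theorem~\ref{thm:thm1}(i), and handle interpolation by the union bound over Theorem~\ref{thm:thm1}(ii)/Corollary~\ref{cor:interpolation} combined with the design of $\lambda$. The only cosmetic differences are in the exact constants of the oracle inequality (the paper uses leading constant $2$ and remainder $C(B^2\log\log n+\log^2 n)/n$ from \cite{wegkamp2003model} directly) and in the split $\beta_j\le 1$ versus $\beta_j>1$ (recall $\lfloor 1\rfloor=0$ here), but your worry about the random truncation is precisely what conditioning on $\mathcal{D}_1$ resolves.
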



\section{Numerical experiment}

In this section, we report some results of our numerical experiment with singular kernel local polynomial estimators. We ran simulations with various kernels and various regression functions in dimension $d=1$. We present below 
some examples of obtained results for two regression functions:
\begin{align*}
    f(x) = x^3 - x \quad \text{and} \quad
    g(x) = x + \cos(3x).
\end{align*} 
We generated $X_1, \dots, X_n$ according to a uniform law on $\left[-2,2\right]$ with $n=80$. We set, for all $i \in \left[n\right]$, $Y_i = f(X_i) + \varepsilon_i$ or $Y_i = g(X_i) + \varepsilon_i$, where $\varepsilon_i$'s are independent normal random variables with mean 0 and variance $0.5$.
We considered three singular kernels and the rectangular kernel:
\begin{align*}
    K_1(u) &= \rvert u \rvert^{-a} \mathbf{1}(\rvert u \rvert \leq 1), \\
    K_2(u) &= \rvert u \rvert^{-a} \left(1-\rvert u \rvert \right)_{+}^2, \\
    K_3(u) &= \rvert u \rvert^{-a} \cos^2(\pi \rvert u \rvert /2) \mathbf{1}(\rvert u \rvert \leq 1),\\
    K_{\text{rect}}(u) &= \mathbf{1}(\rvert u \rvert \leq 1)
\end{align*}
for various choices of $a\in (0,1/2)$. Below we only present the results for $a=0.2$. 

Both $f$ and $g$ belong to Hölder classes with any smoothness $\beta$. We take $\beta=8$ and we compute LP($\ell$) estimators with $\ell =7$ and with bandwidth $h$ chosen, for each kernel, to minimize the mean squared error  (MSE) over a dense enough grid. For each singular kernel estimator, we also compute its smoothed version (named Smooth LPE), which is a result of applying the running median with a short window to the initial LPE.

The results are presented below.  For comparison, we reproduce in each figure the LPE with rectangular kernel on the right hand graph.
Note that $K_1$ is not continuous on $\matR^d \setminus \{0\}$ and therefore does not satisfy the assumptions of Lemma \ref{lem:interpolation} ensuring the interpolation property. Nevertheless, our simulations show that the corresponding LPE does interpolate the data. 

The tables present the MSE values. We note that they are bigger for  singular kernel estimators than for rectangular kernel ones but not excessively big. It supports the fact that singular kernel LPE achieves the minimax optimal rate, with probably worse constant factor than for its non-singular kernel counterparts.
Reasonable MSE values for singular kernel LPE's are obtained in spite of the fact that visually they are very spiky.
The best results are observed for smoothed singular kernel method that cleans out the small spikes. Finally, note that the MSE values are better for function $f$, which itself is a polynomial,
than for function $g$.

\begin{figure}[H]
    \centering
    \includegraphics[scale=0.45]{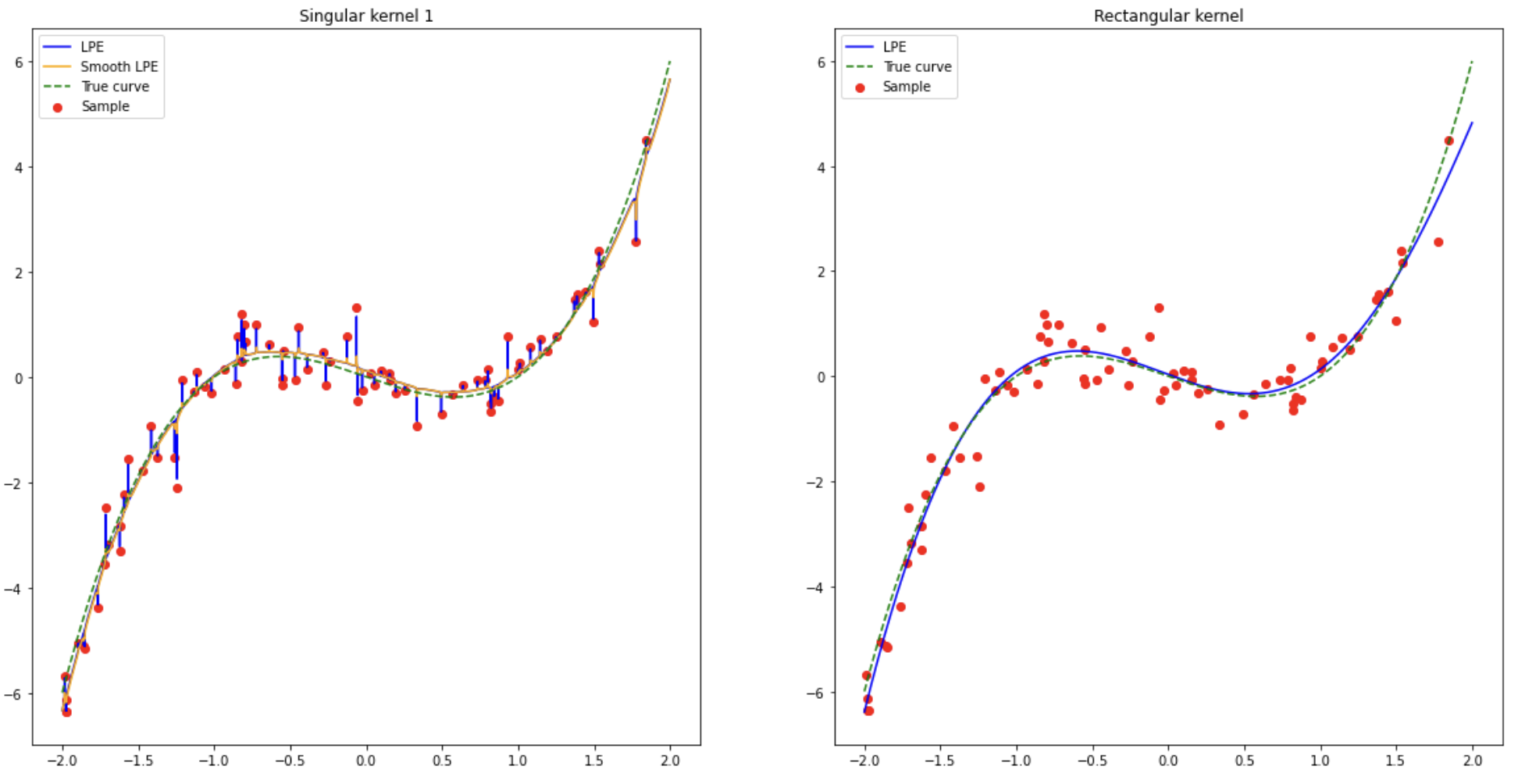}
    \caption{Local polynomial estimator of  regression function $f$ with singular kernel $K_1$ and rectangular kernel.}
  \label{fig:fig_1_1}
  \end{figure}
  
  \begin{figure}[H]
    \centering
    \includegraphics[scale=0.45]{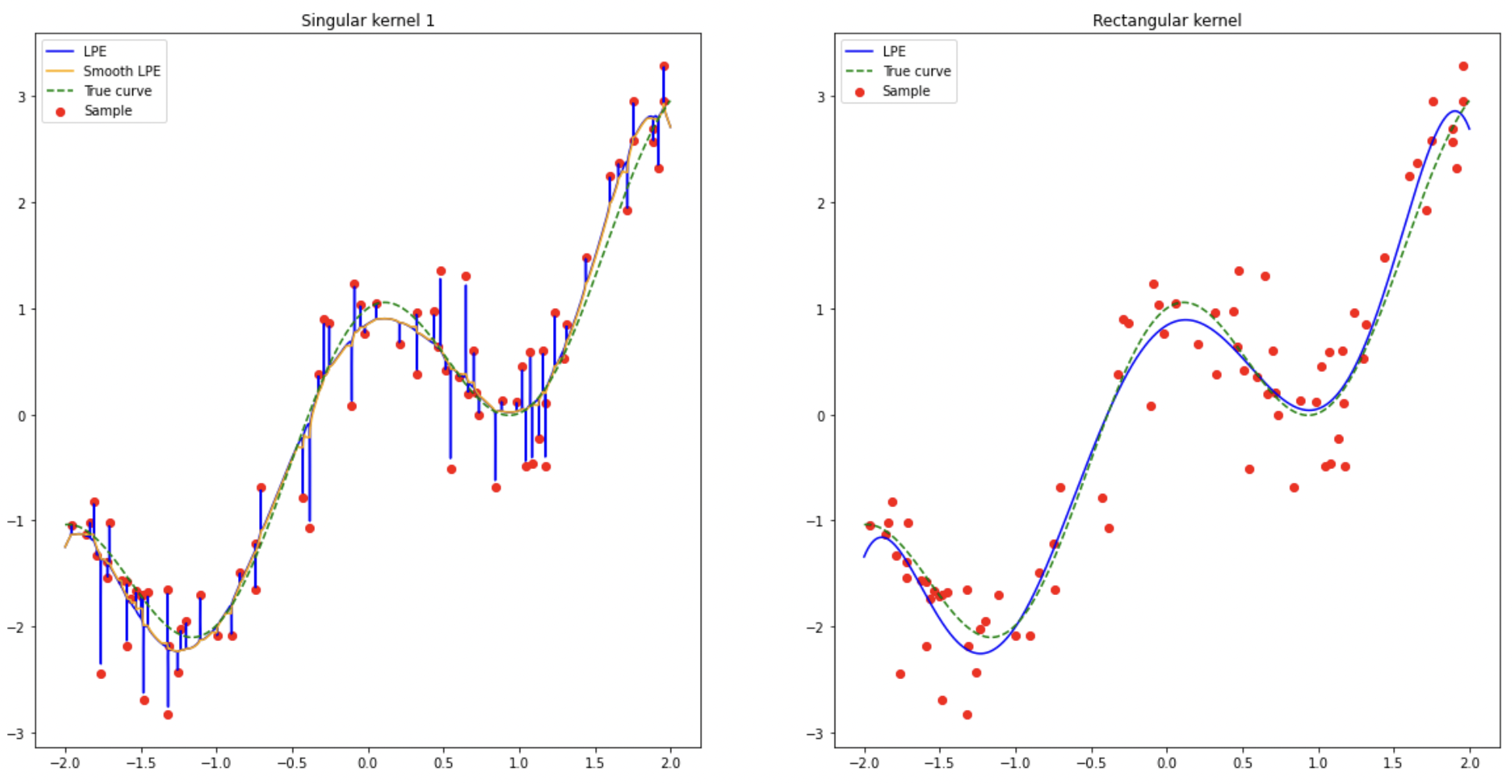}
    \caption{Local polynomial estimator of  regression function $g$ with singular kernel $K_1$ and rectangular kernel.}
  \label{fig:fig_1_2}
  \end{figure}

 \begin{center}
 \begin{tabular}{|l|c|c|c|}
\hline
& Singular kernel $K_1$ & Singular Kernel $K_1$ + Smooth & Rectangular kernel $K_{\text{rect}}$\\
\hline
Function $f$ & 0.0373 & 0.0129 & 0.0129
\\
\hline
Function $g$ & 0.0424 & 0.0144 & 0.0154
\\
\hline
\end{tabular}
\end{center}

\begin{figure}[H]
    \centering
    \includegraphics[scale=0.45]{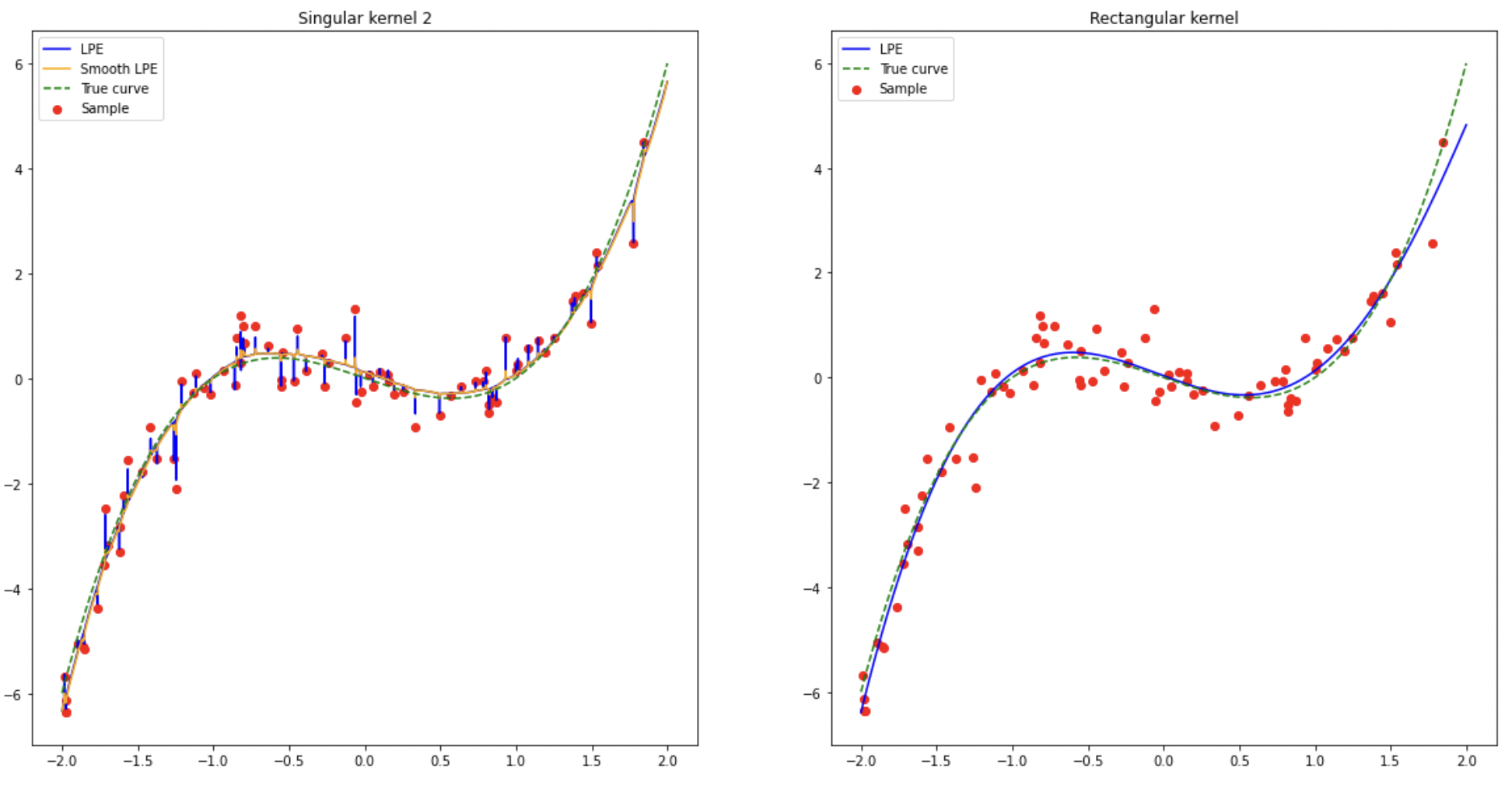}
    \caption{Local polynomial estimator of  regression function $f$ with singular kernel $K_2$ and rectangular kernel.}
  \label{fig:fig_2_1}
  \end{figure}
  
  \begin{figure}[H]
    \centering
    \includegraphics[scale=0.45]{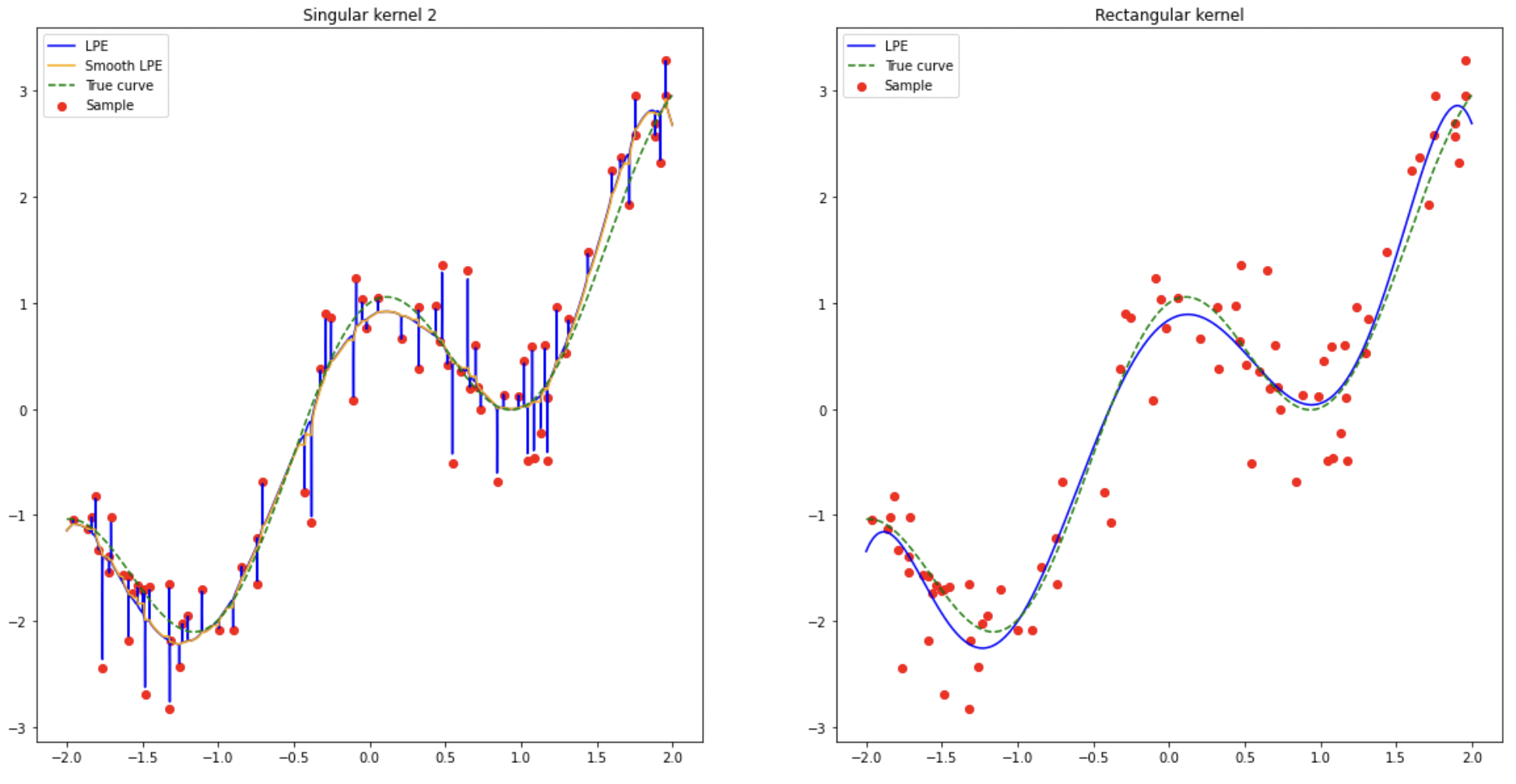}
    \caption{Local polynomial estimator of  regression function $g$ with singular kernel $K_2$ and rectangular kernel.}
  \label{fig:fig_2_2}
  \end{figure}

 \begin{center}
 \begin{tabular}{|l|c|c|c|}
\hline
& Singular kernel $K_2$ & Singular kernel $K_2$ + Smooth & Rectangular kernel $K_{\text{rect}}$\\
\hline
Function $f$ & 0.0383 & 0.0130 & 0.0129
\\
\hline
Function $g$ & 0.0433 & 0.0144 & 0.0154
\\
\hline
\end{tabular}
\end{center}

\begin{figure}[H]
    \centering
    \includegraphics[scale=0.45]{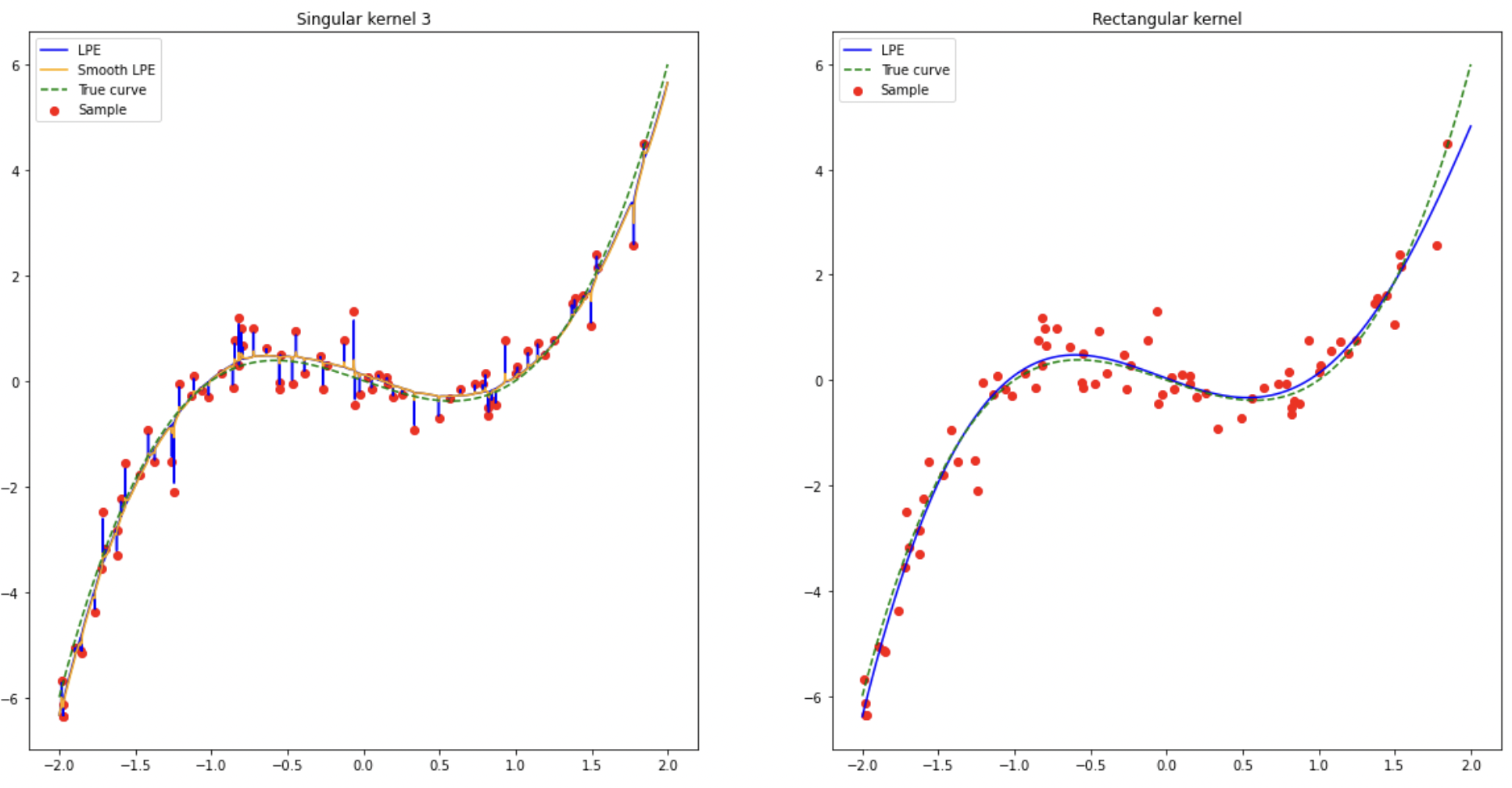}
    \caption{Local polynomial estimator of  regression function $f$ with singular kernel $K_3$ and rectangular kernel.}
  \label{fig:fig_3_1}
  \end{figure}
  
  \begin{figure}[H]
    \centering
    \includegraphics[scale=0.45]{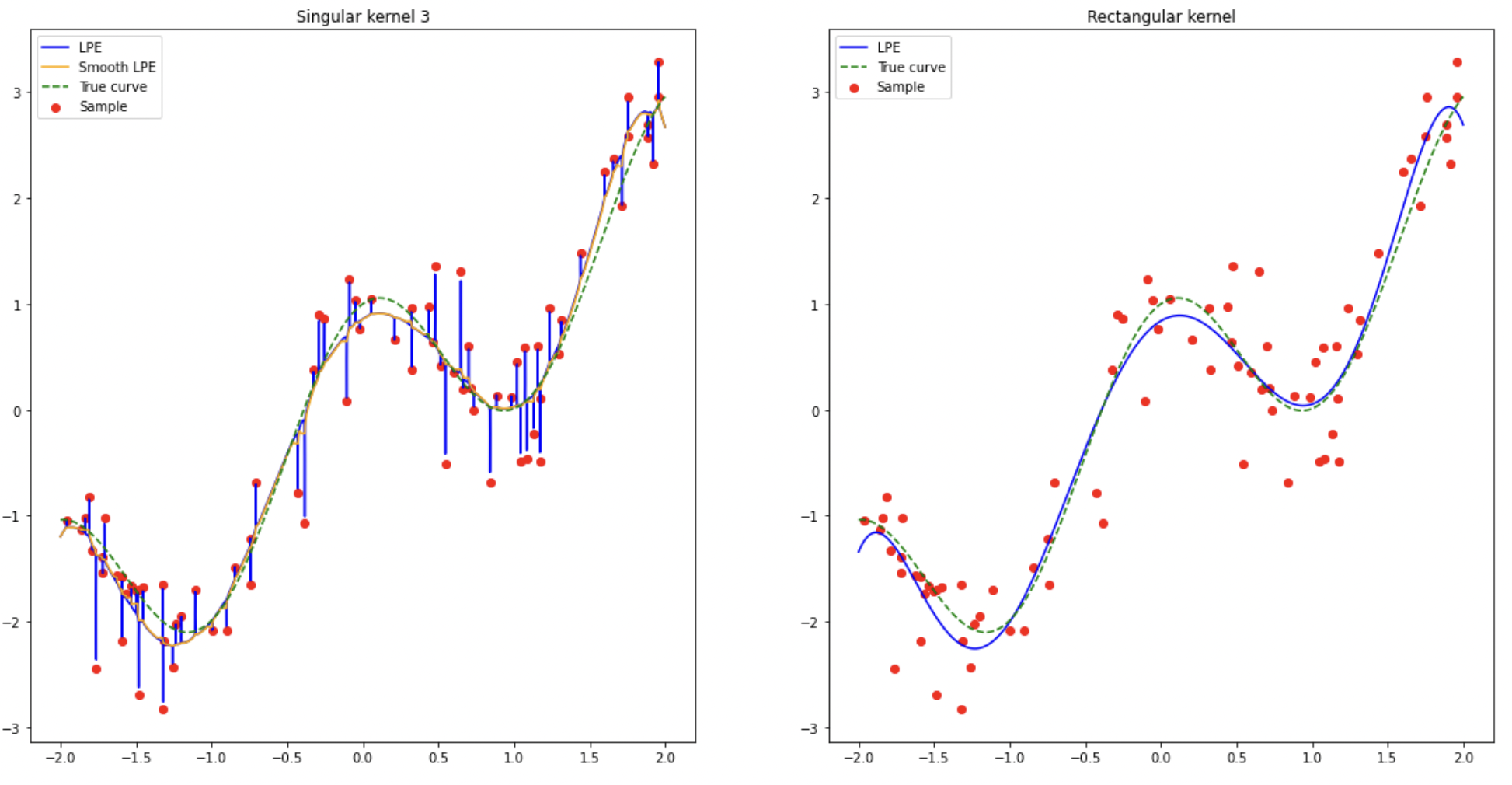}
    \caption{Local polynomial estimator of  regression function $g$ with singular kernel $K_3$ and rectangular kernel.}
  \label{fig:fig_3_2}
  \end{figure}

 \begin{center}
 \begin{tabular}{|l|c|c|c|}
\hline
& Singular kernel $K_3$ & Singular kernel $K_3$ + Smooth & Rectangular kernel $K_{\text{rect}}$\\
\hline
Function $f$ & 0.0373 & 0.0129 & 0.0129
\\
\hline
Function $g$ & 0.0426 & 0.0146 & 0.0154
\\
\hline
\end{tabular}
\end{center}

 \section{Conclusion}

We have shown that local polynomial estimators with singular kernels can achieve minimax optimal rates of convergence (with respect to the mean squared risk) while perfectly interpolating the data, and moreover, can do it adaptively to the the smoothness of the regression function. This seemingly surprising conclusion is indeed not surprising at all because the mean squared risk is used as a criterion. Indeed, by adding "by hand" extremely small spikes to an accurate enough regression estimator we can always get a function interpolating the data and having a reasonably good mean squared risk. Of course, such a construction is very artificial. It makes no sense in practice and it is problematic to achieve adaptation in this way. The miracle of singular kernel LPE is to provide such an effect automatically, including adaptation, as we have outlined above. 
The resulting interpolating estimators have quite a reasonable behavior in terms of mean squared criterion but not in terms of visual criteria. Note that the interpolating procedures developed in different contexts in the recent literature, in particular, in deep learning are analyzed only in terms of mean squared error and expectedly share the same drawback. The difference from our setting is that, in those models, the resulting estimators are not easy to visualize, so that this sort of "spiky" behavior is not made explicit. 

\textbf{Acknowledgments}: This work was supported by the grant of French National Research Agency (ANR) "Investissements d'Avenir" LabEx Ecodec/ANR-11-LABX-0047.

  
  
  

\appendix
\section{Appendix}

\begin{proof}[Proof of Lemma \ref{lem:Holder_nested}]
The result is straightforward if there exists an integer $\ell\ge 0$ such that $\ell<\beta'\le\beta \le \ell+1$. Indeed, for any integer $\ell\ge 0$,
\begin{equation}\label{eq:betabeta1}
  \ell<\beta'\le\beta \le \ell+1 \quad \Longrightarrow \quad \Sigma(\beta,L) \subseteq \Sigma(\beta', L).  
\end{equation}
Thus, it remains to consider the case $\ell<\beta' \le \ell+1<\beta$ for an integer $\ell$. Handling this case will be based on the following embedding: 
\begin{equation}\label{eq:betabeta}
  \Sigma(\beta,L) \subseteq \Sigma(\ell', 2L), \quad \forall \ell'\in \matN \text{ such that } \ell'<\beta.  
\end{equation}
We now prove \eqref{eq:betabeta}. Indeed, let $f \in \Sigma(\beta,L)$ and let $\ell'$ be an integer less than $\beta$. Then, in particular,  $ \max\limits_{0\le s \leq \ell'}\sup\limits_{x\in \Bd} \norm{f^{(s)}(x)} \leq L$. Consider $x,y \in \Bd$ and $h = y-x$. Denote by $h_i$ the $i$th component of $h$ and by $e_i$ the $i$th canonical basis vector in $\matR^d$. Set $k=\ell'-1$. Then for any multi-indices $m_1,\dots,m_{k}\in \matN^d$ we have 
\begin{align*}
D^{m_1+\cdots+m_{k}}f(y)-D^{m_1+\cdots+m_{k}}f(x)&=
\int_0^1 \left\langle\nabla D^{m_1+\cdots+m_{k}}f(x+th), h \right \rangle dt
\\
&=
\int_0^1 \sum_{i=1}^d  D^{m_1+\cdots+m_k+e_i}f(x+th) h_i  dt
\\
&=
 \sum_{i=1}^d \int_0^1 D^{m_1+\cdots+m_{k}+e_i}f(x+th) dt \, h^{e_i}.
\end{align*}
Writing for brevity 
$G_{m_1,\dots,m_k,e_i}(x,h)= \int_0^1 D^{m_1+\cdots+m_{k}+e_i}f(x+th) dt$ we obtain
\begin{align*}
  &  \norm{f^{(k)}(y) - f^{(k)}(x)} 
    =
    \sup_{\substack{\|u_j\|\leq 1,\\j \in [k]}} \Big| \sum_{|m_j|=1, \forall j\in[k]}  \sum_{i=1}^d G_{m_1,\dots,m_k,e_i}(x,h) \, h^{e_i} u_1^{m_1} \dots u_k^{m_k} \Big|
    \\
  & \qquad =
    \|h\| \sup_{\substack{\|u_j\|\leq 1,\\j \in [k]}} \Big| \sum_{|m_j|=1, \forall j\in[k]}  \sum_{i=1}^d G_{m_1,\dots,m_k,e_i}(x,h) \, \left(\frac{h}{\|h\|}\right)^{e_i} u_1^{m_1} \dots u_k^{m_k} \Big|
    \\
   & \qquad \le
    \|h\| \sup_{\substack{\|u_j\|\leq 1,\\j \in [k+1]}} \Big| \sum_{|m_j|=1, \forall j\in[k+1]}   \int_0^1 D^{m_1+\cdots+m_{k+1}}f(x+th) dt \,  u_1^{m_1} \dots u_{k+1}^{m_{k+1}} \Big|
    \\
   & \qquad \leq 
     \|h\| \int_0^1 \sup_{\substack{\|u_j\|\leq 1,\\j \in [k+1]}} \big|f^{(k+1)}(x+th)[u_1,\dots, u_{k+1}]\big| dt
    \\
   &  \qquad\leq 
     \|h\| \sup_{z\in \Bd} \|f^{(k+1)}(z)\|_*
     \le L \|x-y\|,
\end{align*}
which, together with bound $ \max\limits_{0\le s \leq \ell'-1}\sup\limits_{x\in \Bd} \norm{f^{(s)}(x)} \leq L$ implies that $f\in \Sigma(\ell',2L)$. Thus, we have proved \eqref{eq:betabeta}. 

It follows from \eqref{eq:betabeta} that if $\ell<\beta' \le \ell+1<\beta$ for an integer $\ell$
then $\Sigma(\beta,L)\subseteq \Sigma(\ell+1,2L)$, while  taking $\beta=\ell+1$ in \eqref{eq:betabeta1} implies that $\Sigma(\ell+1,2L)\subseteq \Sigma(\beta',2L)$. This proves the lemma when $\ell<\beta' \le \ell+1<\beta$ for an integer $\ell$.
\end{proof}

\begin{proof}[Proof of Lemma \ref{lem:polynomial}]
The result is clear for $\beta \leq 1$. Assume that $\beta>1$ and fix some $x,y \in \Bd$. By Taylor expansion, there exists $c \in (0,1)$ such that
\begin{align*}
    f(x) = \sum_{0\leq |k| \leq \ell-1} \frac{1}{k!} D^k\! f(y)(x\!-\!y)^k + \sum_{ |k| = \ell} \frac{1}{k!} D^k\! f(y\!+\!c(x\!-\!y))(x\!-\!y)^k,
\end{align*}
and
\begin{align*}
    \left|f(x) \hspace{-1mm} - \hspace{-2mm} \sum_{|k| \leq \ell} \frac{1}{k!} D^k\! f(y)(x\!-\!y)^k\right| 
    & = 
    \left| \sum_{|k| = \ell} \frac{1}{k!} \big[D^k\! f\big(y\!+\!c(x\!-\!y)\big)-D^k\! f(y)\big](x\!-\!y)^k \right|.
\end{align*}
By a standard combinatorial argument, it is not hard to check that, for any $h, z\in \matR^d$,
\begin{align*}
f^{(k)}(z)[h]^k:
    &= 
    \sum_{|m_1| = \ldots = |m_{\ell}| = 1} D^{m_1 + \ldots + m_{\ell}}f(z) h^{m_1 + \ldots + m_{\ell}}
    =
    \sum_{|k| = \ell} \frac{\ell !}{k!}D^{k}f(z) h^{k}\enspace.
\end{align*}
It follows that
\begin{align}\label{eq:lem2}
&\left| \sum_{|k| = \ell} \frac{1}{k!} \big[D^k\! f\big(y\!+\!c(x\!-\!y)\big)-D^k\! f(y)\big](x\!-\!y)^k \right|
\\ 
  & \qquad =  \frac{1}{\ell!}
  \left| f^{(\ell)}\big(y\!+\!c(x\!-\!y)\big)[x\!-\!y]^{\ell} - f^{(\ell)}(y) [x\!-\!y]^{\ell} \right|
  \nonumber
  \\ 
  & \qquad \le  \frac{1}{\ell!}
  \left\| f^{(\ell)}\big(y\!+\!c(x\!-\!y)\big) - f^{(\ell)}(y)  \right\|_* \|x\!-\!y\|^\ell
  \nonumber
    \\
    & \qquad \leq \frac{L}{\ell!} \|x\!-\!y\|^\ell \|c(x\!-\!y)\|^{\beta-l} \leq \frac{L}{\ell!} \|x\!-\!y\|^\beta.
    \nonumber
\end{align}
\end{proof}

\begin{proof}[Proof of Lemma \ref{lem:interpolation}]
In this proof, we fix $i \in \left[ n\right]$, and
our aim is to prove that $\lim_{x\to X_i}f_n(x)=Y_i$. Let $\mathcal{V}$ be the neighborhood of $X_i$ where \eqref{pos-def} holds. Since $X_1,\dots,X_n$ are distinct, we assume w.l.o.g. that $\mathcal{V}$ does not contain $(X_j)_{j\ne i}$. Due to conditions \eqref{k} and \eqref{pos-def}, we have that $B_{nx}\succ 0$ for all $x$ in $\mathcal{V}_{-}:=\mathcal{V} \setminus \{X_i\}$. Thus, for all $x\in \mathcal{V}_{-}$ the vector $\hat{\theta}_n(x)$ is the unique solution of \eqref{eq:eq1}, and $f_n(x)$ is given by \eqref{eq:eq4a}:
\begin{align*}
    &\hat{\theta}_n(x)= \underset{\theta \in \matR^{C_{\ell,d}}}{\rm argmin} \ \sum_{i=1}^n \left[Y_i - \theta^{\top} U\left(\frac{X_i-x}{h} \right) \right]^2 K \left(\frac{X_i-x}{h} \right), \\
    &f_n(x) = U^\top (0) \hat{\theta}_n(x). 
\end{align*}
Define $g_i(x) = \left( Y_i - \hat{\theta}_n(x)^\top U\left(\frac{X_i-x}{h}\right) \right)^2$.  
First, we prove by contradiction that $\lim \limits_{x \rightarrow X_i} g_i(x) = 0$ for any $i \in \left[n\right]$. 
Indeed, suppose that $\lim_{x \rightarrow X_i} g_i(x) \neq 0$. Then, there is a sequence $(x_k)_k$ in $\matR^d$ converging to $X_i$ as $k\to \infty$ such that $\lim_{k \to \infty} g_i(x_k) = + \infty$ or $\lim_{k \to \infty} g_i(x_k)= {\rm const} >0$.
In both cases, 
\begin{align}\label{lim-gk}
\lim_{k \to \infty} \sum_{j=1}^n g_j(x_k) K\left( \frac{X_j-x_k}{h} \right) = +\infty
\end{align}
since the kernel $K$ has a singularity at 0. 
On the other hand, the definition of $\hat{\theta}_n(x_k)$ implies that, for any $k$ and any $\theta_* \in \matR^{C_{\ell,d}}$,
$$
\sum_{j=1}^n g_j(x_k) K\left( \frac{X_j-x_k}{h} \right) \le 
\sum_{j=1}^n \left(Y_j - \theta_*^\top U\left( \frac{X_j-x_k}{h}\right) \right)^2 K\left( \frac{X_j-x_k}{h}\right).
$$
In particular, for $\theta_*^\top = \left( Y_i \quad 0 \dots 0 \right)$ we have
\begin{align*}
    \sum_{j=1}^n \left(Y_j - \theta_*^\top U\left( \frac{X_j-x_k}{h}\right) \right)^2 K\left( \frac{X_j-x_k}{h}\right) =& \sum_{j=1}^n (Y_j - Y_i)^2 K\left( \frac{X_j-x_k}{h}\right) \\
    =& \sum_{j \neq i} (Y_j - Y_i)^2 K\left( \frac{X_j-x_k}{h}\right) \\
    \underset{k \rightarrow + \infty}{\rightarrow} &\sum_{j \neq i} (Y_j - Y_i)^2 K\left( \frac{X_j-X_i}{h}\right) < +\infty,
\end{align*}
which is in contradiction with \eqref{lim-gk}. 
Therefore, for any $i \in \left[n\right]$ we have $\lim \limits_{x \rightarrow X_i }g_i(x) = 0$. 

A similar argument yields that $\lim \sup \limits_{x \rightarrow X_i }g_j(x)
< +\infty$ for any $j\ne i$. Indeed, if for some $j\ne i$ this relation does not hold then there is a sequence $(x_k)_k$ in $\matR^d$ converging to $X_i$ as $k\to \infty$ such that $\lim_{k \to \infty} g_j(x_k) = + \infty$. It implies \eqref{lim-gk}, which is not possible as shown above.

Next, we prove that $\|\hat{\theta}_n(x)\|$ is bounded for all $x$ in a neighborhood of $X_i$.
Since $\lim \limits_{x \rightarrow X_i} g_i(x) = 0$, and for any $j\ne i$ we have $\lim \sup\limits_{x \rightarrow X_i} g_j(x) < + \infty$ the values $g_j(x)$  are bounded for all $j \in \left[ n\right]$ and all $x$ in a neighborhood of $X_i$. We will further denote this neighborhood by $\mathcal{V}'$. It follows that $\varphi_j(x) = \hat{\theta}_n(x)^\top U\left( \frac{X_j-x}{h} \right)$, $j=1,\dots,n,$ are bounded for $x\in \mathcal{V}'$ and thus the sum $\sum_{j=1}^n \varphi_j^2(x)$ is bounded as well. On the other hand, by assumption \eqref{pos-def}, for all $x\in \mathcal{V}_{-}$,
\begin{align*}
    \sum_{j=1}^n \varphi_j^2(x) &\ge  \sum_{j=1}^n \hat{\theta}_n(x)^\top U\left( \frac{X_j-x}{h}\right) U^\top\left( \frac{X_j-x}{h}\right) \mathbf{1}\left( \left\| \frac{X_j-x}{h} \right\|\leq \Delta\right)\hat{\theta}_n(x) \\
    &\ge  \lambda_1 \| \hat{\theta}_n(x) \|^2, 
\end{align*}
where $\lambda_1>0$. It follows that $\| \hat{\theta}_n(x) \|$ is bounded for all $x\in \mathcal{V}'\cap \mathcal{V}_{-}$. 

Let $\hat{\theta}_{n,(1)}(x)=f_n(x)$ denote the first component of $\hat{\theta}_{n}(x)$ and $\hat{\theta}_{n,(2)}(x)$ the vector of its remaining $C_{\ell,d}-1$ components, so that $\hat{\theta}_{n}(x)^\top = \left(\hat{\theta}_{n,(1)}(x),  \hat{\theta}_{n,(2)}(x)^\top \right)$. Recall that the first component of $U(u)$ is equal to 1 for all 
$u\in \matR^d$. Denote by $U_{(2)}(u)$ the vector of its remaining $C_{\ell,d}-1$ components, so that $U(u)^\top = \left( 1, U_{(2)}(u)^\top\right)$. 
With this notation, the relation $\lim \limits_{x \rightarrow X_i} g_i(x) = 0$ proved above can be written as:
\begin{align*}
    g_i(x) &= \left( Y_i - \hat{\theta}_{n,(1)}(x) - \hat{\theta}_{n,(2)}(x)^\top U_{(2)}\left(\frac{X_i-x}{h}\right) \right)^2 
    \underset{x \rightarrow X_i}{\rightarrow}  0.
\end{align*}
Since $\| \hat{\theta}_n(x) \|$ is bounded for  $x\in \mathcal{V}'\cap \mathcal{V}_{-}$ we get that $| \hat{\theta}_{n,(1)}(x)|$ and $\| \hat{\theta}_{n,(2)}(x) \|$ are also bounded for  $x\in \mathcal{V}'\cap \mathcal{V}_{-}$. The definition of $U(u)$ implies the convergence $\lim \limits_{x \rightarrow X_i}\| U_{(2)}\left(\frac{X_i-x}{h}\right)  \|=0$. It follows that 
\begin{align*}
    \hat{\theta}_{n,(2)}(x)^\top U_{(2)}\left(\frac{X_i-x}{h}\right) \underset{x \rightarrow X_i}{\rightarrow}  0
\end{align*}
and therefore
\begin{align*}
    \hat{\theta}_{n,(1)}(x) \underset{x \rightarrow X_i}{\rightarrow} Y_i,
\end{align*}
which concludes the proof since $\hat{\theta}_{n,(1)}(x)=f_n(x)$.
\end{proof}

\hfill

\begin{proof}[Proof of Lemma \ref{lem:lem1}]
We prove only part (i) of the lemma since part (ii) is its immediate consequence.
We have
$$ \overline{B}_{nx} = \frac{1}{n{ h^d}}\sum_{i=1}^n U \left(\frac{X_i-x}{h} \right)U^{\top} \left(\frac{X_i-x}{h} \right) \mathbf{1}\left( \frac{\| X_i - x \|}{\Delta} \leq h\right) $$
and, for any $\lambda_0>0$,
\begin{align}\nonumber
    &\matP\left(\inf_{x \in  {\rm Supp}(p)} \hspace{-3mm} \lmin(\overline{B}_{nx}) < \lambda_0\right) = \matP\left(\inf_{x \in  {\rm Supp}(p)} \inf_{\| v \| = 1} v^{\top} \overline{B}_{nx} v < \lambda_0 \right) \\
    & \leq \matP \left(\inf_{x \in  {\rm Supp}(p)} \inf_{\| v \| = 1} \! v^{\top} \overline{B}(x) v - \hspace{-4mm} \sup_{x \in  {\rm Supp}(p)} \hspace{-4mm} \| \overline{B}_{nx} \!-\! \overline{B}(x) \|_{\infty} < \lambda_0  \right)\label{eq:proofL4:1}
\end{align}
where $\overline{B}(x) := \matE(\overline{B}_{nx})$. Set $S(x,h,\Delta) = \left\{u\in \Bd(0,\Delta): x + uh \in {\rm Supp}(p)\right\}$. Then we have
\begin{align*}
    v^{\top} \overline{B}(x) v &= {\frac{1}{h^d}} \int \left[ v^{\top} U \left(\frac{z-x}{h} \right)\right]^2 \mathbf{1} \left( \left\| \frac{z-x}{h}\right\|\leq \Delta\right)p(z) \mathrm{d}z \\
    & \ge p_{\min} v^\top \left[\int_{S(x,h,\Delta)} U(u)U(u)^\top du\right] v
     \\
    & \ge p_{\min} v^\top \left[\int_{S(x,\alpha,\Delta)} U(u)U(u)^\top du\right] v,
\end{align*}
where for the last inequality we used the fact that $ S(x,\alpha,\Delta) \subset  S(x,h,\Delta)$ since $h  \leq \alpha$ and $\text{Supp}(p)$ is a convex set. Notice that $S(x,\alpha,\Delta)$ is
also a convex set and it is not reduced to one point $x$ as $\text{Supp}(p)$ is a convex set with positive Lebesgue measure. Thus, $S(x,\alpha,\Delta)$ is of infinite cardinality for any $x\in \text{Supp}(p)$.

Denote by $S_d(0,1)$ the unit sphere in $\matR^d$
centered at 0. Note that, for fixed $\Delta$ and $\alpha$, the function 
\begin{align*}
\left\{
\begin{array}{ccl}
    \text{Supp}~p \times S_d(0,1) &\longrightarrow & \matR\\
    (x,v) &\mapsto& v^\top \left[\int_{S(x,\alpha,\Delta)} U(u)U(u)^\top du\right] v
\end{array}
\right.
\end{align*}
is continuous and defined on a compact set.  Therefore, it attains its minimum at some $(x_0,v_0)$, where $x_0\in \text{Supp}(p)$ and  $\|v_0\|=1$. We argue now that the value of this minimum is positive. Indeed, it is clearly non-negative, and if it were $0$ we would have:
\begin{align}\label{eq:zero}
    0=v_0^\top U(u) = \sum_{|k|\leq \ell} v_0(k) \frac{u^k}{k!}, \quad \forall u \in S(x_0,\alpha,\Delta).
\end{align}
As observed above, $S(x_0,\alpha,\Delta)$ is a set of infinite cardinality. On the other hand, the expression in \eqref{eq:zero} is a polynomial in $u$, so that for $v_0 \ne 0$ it can vanish only in a finite number of points. Thus, \eqref{eq:zero} is impossible. It follows that 
$$
\lambda_1(\ell): =\min_{v\in S_d(0,1),x \in \text{Supp}(p)}v^\top \left[\int_{S(x,\alpha,\Delta)} U(u)U(u)^\top du\right] v >0 .
$$
Next, note that the vector $U(u) = U_{\ell}(u)$ depends on $\ell$, and that for $\ell \leq \ell'$ and any fixed $x$, the matrix $\int_{S(x,\alpha,\Delta)} U_\ell(u)U_\ell(u)^\top du$ is an extraction of the matrix $\int_{S(x,\alpha,\Delta)} U_{\ell'}(u)U_{\ell'}(u)^\top du$.
Hence, the smallest eigenvalue of the former matrix is necessarily not less than that of the latter. Thus, $\lambda_1(\ell)\ge \lambda_1(\ell')$ for $\ell \leq \ell'$.

Setting $\lambda_0=\lambda_0(\ell): =p_{\min} \lambda_1(\ell)/2$ and using \eqref{eq:proofL4:1} we find:
\begin{align}\label{eq:proofL4:2}
   \matP \left(\inf_{x \in \text{Supp}(p)} \lmin(\overline B_{nx}) < \lambda_0\right) \leq \matP\left( \sup_{x \in  \text{Supp}(p)}\| \overline{B}_{nx} - \overline{B}(x)\|_{\infty} >  \lambda_0\right).
\end{align}
It remains now to bound the probability on the right hand side of \eqref{eq:proofL4:2}.

By Assumption (A2), the convex compact set $\text{Supp}(p)$ is included in  $\Bd=\mathcal{B}_d(0,1)$. For $\varepsilon > 0$, let  $\{x_1,\dots,x_N\} \subset \mathcal{B}_d^N$ be the minimal $\varepsilon$-net on $\Bd$ in the Euclidean metric. Then we have:  
\begin{align*}
    \sup_{x \in \text{Supp}(p)}\|\overline{B}(x) - \overline{B}_{nx}\|_{\infty} &\leq \sup_{x \in \Bd} \min_{1\leq k \leq N} \|\overline{B}(x) - \overline{B}(x_k)\|_\infty \\
    & + \max_{1\leq k \leq N} \|\overline{B}(x_k) - \overline{B}_{nx_k}\|_{\infty}  + \sup_{\substack{x,x' \in \Bd,\\ \| x - x'\| \leq \varepsilon}} \|\overline{B}_{nx} - \overline{B}_{nx'}\|_{\infty}.
\end{align*}
Thus,
\begin{align}
\label{eq:eq3}
&\matP\left(\sup_{x\in \text{Supp}(p)}\| \overline{B}_{nx} - \overline{B}(x)\|_{\infty} > \lambda_0 \right) \le P_1+P_2 +P_3, \ \ \text{where} 
\\
& P_1=\matP\left(\sup_{x \in \Bd}\min_{1\leq k \leq N} \|\overline{B}(x) - \overline{B}(x_k)\|_\infty > \frac{\lambda_0}{3}\right), \nonumber \\
& P_2=
\matP\left(\max_{1\leq k \leq N} \|\overline{B}(x_k) - \overline{B}_{nx_k}\|_{\infty} > \frac{\lambda_0}{3}\right), \nonumber \\ 
& P_3= \matP\Bigg(\sup_{\substack{x,x' \in \Bd:\\ \| x - x'\| \leq \varepsilon}} \|\overline{B}_{nx} - \overline{B}_{nx'}\|_{\infty}>\frac{\lambda_0}{3}\Bigg). \nonumber
\end{align} 
In the rest of the proof, we control the terms $P_1,P_2,P_3$.

{\it Control of $P_2$.} Since all norms in the space of $C_{\ell,d} \times C_{\ell,d}$ matrices  are equivalent there exists a constant $c_1 >0$ depending only on $\ell, d$ such that, for all $k\in \{1,\dots,N\}$,
\begin{align*}
    \| \overline{B}(x_{k})-\overline{B}_{nx_{k}} \|_{\infty} \leq c_1 \max_{1\leq i,j \leq C_{\ell,d}} \rvert b_{nx_{k}}(i,j) - b_{x_{k}}(i,j) \rvert
\end{align*}
where $b_{nx_{k}}(i,j)$ and $b_{x_{k}}(i,j)$ are the elements of $\overline{B}_{nx_{k}}$ and $\overline{B}(x_{k})$, respectively. 
Then, for any $k\in\{1,\dots,N\}$,
\begin{align*}
    \matP\left(\|  \overline{B}(x_{k})-\overline{B}_{nx_k} \|_{\infty} > \frac{\lambda_0}{3}\right) \leq C_{\ell,d}^2 \max_{1\leq i,j\leq C_{\ell,d}} \matP\left(\rvert b_{nx_k}(i,j)-b_{x_{k}}(i,j)\rvert > \frac{\lambda_0}{3c_1} \right). 
\end{align*}
We recall that $b_{x_{k}}(i,j) = \matE \left[b_{nx_k}(i,j)\right]$. Setting $s=s^{(i)}$ and $r=s^{(j)}$ we have
\begin{align*}
    b_{nx_k}(i,j) = \frac{1}{n{ h^d}} \sum_{m=1}^n \frac{(X_m - x_k)^s}{h^s s!}\frac{(X_m - x_k)^r}{h^r r!} \mathbf{1} \left( \left\| \frac{X_m-x_k}{h} \right\| \leq \Delta \right).
\end{align*}
This is a sum of $n$ i.i.d. random variables, each of which is bounded in absolute value by ${\frac{C}{n h^d}}$ and has variance not exceeding ${\frac{C}{n^2 h^d}}$, where $C>0$ is a constant depending only on $\ell,d,\Delta$. By Bernstein's inequality, 
\begin{align*}
    \matP\left(\rvert b_{nx_k}(i,j)-b_{x_{k}}(i,j)\rvert > \frac{\lambda_0}{3c_1} \right) \leq 
    2  \exp(-c_2n h^d),
\end{align*}
where $c_2 >0$ only depends on $\ell,d,\Delta$ and not on $n, k, i,j$. It follows from the above inequalities and the union bound that
\begin{align}
\label{eq:eq12a}
    P_2 \leq 
    2 N C_{\ell,d}^2 \exp(-c_2n h^d).
\end{align}

{\it Control of $P_3$.} For any $x,x'\in \Bd$,
\begin{align*}
    \overline{B}_{nx} - \overline{B}_{nx'} = \frac{1}{nh^d}\sum_{i=1}^n&\Big[
    U\left(\frac{X_i-x}{h}\right)U^\top \left(\frac{X_i-x}{h}\right)\mathbf{1}\left(
    \left\|\frac{X_i-x}{h}\right\|\leq \Delta\right) 
    - \\
    &U\left(\frac{X_i-x'}{h}\right)U^\top \left(\frac{X_i-x'}{h}\right)\mathbf{1}\left(
    \left\|\frac{X_i-x'}{h}\right\|\leq \Delta\right)\Big].
\end{align*}
For any $u \in \matR^d$ consider the matrix
\begin{equation}
    V(u) = U(u) U^\top(u) \mathbf{1}\{\|u\|\leq \Delta\}.
\end{equation}
Notice that $U(u)\in\matR^{C_{\ell,d}}$ is Lipschitz continuous in $u$ on the ball $\mathcal{B}_d(0,\Delta)$ since the components of vector $U(u)$ are polynomials in $u$.  Thus, there exists a  constant $\widetilde L>0$ depending only on $\ell$ and $d$ such that for any $u,u' \in \matR^d$,  if either $\|u\|\leq \Delta, \|u'\|\leq \Delta$ or $\|u\| > \Delta, \|u'\| > \Delta$, then
\begin{align*}
 \big\| V(u)-V(u') \big\|_{\infty} \leq \widetilde L \|u-u'\| ,
\end{align*}
and if $(u,u')$ belongs to the set $$\widetilde\Delta:=\{(u,u'):\|u\|\leq \Delta, \|u'\|> \Delta\}\cup \{(u,u'):\|u\| > \Delta, \|u'\| \le \Delta\}$$ then
\begin{align*}
    \big\| V(u)-V(u') \big\|_{\infty} \leq \widetilde L,
\end{align*}
taking $\widetilde L \geq \max\limits_{\|u\| \leq\Delta}\left\|U(u) U(u)^\top\right\|_\infty$. It follows that
\begin{align*}
    \big\| V(u)-V(u') \big\|_{\infty} \leq \widetilde L\Big\{\|u-u'\|  + \mathbf{1}((u,u') \in \widetilde\Delta)\Big\},\numberthis \label{eq:V_Lipschitz}
\end{align*}
which implies the bound
\begin{align*}
    \hspace{-3mm} \|\overline{B}_{nx} - \overline{B}_{nx'}\|_\infty \leq \frac{\widetilde L}{h^{d+1}}\|x-x'\| + \frac{\widetilde L}{nh^d}\text{Card}\Big\{i \in [n] : X_i \in \widetilde\Delta(x,x',h\Delta)\Big\},
\end{align*}
where we denote by $\widetilde\Delta(x,x',h\Delta)$ the symmetric difference $\mathcal{B}_d(x,h\Delta) \triangle \mathcal{B}_d(x',h\Delta)$. Thus,
\begin{equation}
\label{eq:eq13a}
    \sup_{\substack{x,x' \in \Bd:\\ \| x - x'\| \leq \varepsilon}}\|\overline{B}_{nx} - \overline{B}_{nx'}\|_\infty \leq \frac{\widetilde L \varepsilon}{h^{d+1}} + \frac{\widetilde L}{nh^d} 
    \sup_{\substack{x,x' \in \Bd:\\ \| x - x'\| \leq \varepsilon}}
    \sum_{i=1}^n \mathbf{1} \big(X_i \in \widetilde\Delta(x,x',h\Delta)\big),
\end{equation}
If $\|x-x'\| \leq \varepsilon$ then
$$\widetilde\Delta(x,x',h\Delta)\subseteq \{z:h\Delta< \|z-x\|\le h\Delta+\varepsilon\} \cup \{z:h\Delta< \|z-x'\|\le h\Delta+\varepsilon\}.$$
Therefore,  for $\|x-x'\| \leq \varepsilon$ we have  $\big|\widetilde \Delta(x,x',h\Delta)\big| \leq C_* h^{d-1} \varepsilon$,  where we denote by $|S|$ the Lebesgue measure of a measurable set $S \subset \matR^d$, and $C_*>0$ is a constant depending only on $\Delta$ and $d$. 
Set $\varepsilon= c_0 h^{d+1}$, where the constant $c_0$ satisfies $0< c_0\le \displaystyle{\frac{\lambda_0}{6 \widetilde L}}$. Then for $\|x-x'\| \leq \varepsilon$ we get $\matP(X_1\in \Delta(x,x',h\Delta)) \le p_{\max} C_* c_0 h^{2d}$. Choose $c_0$ small enough (and depending only on $\ell, d, p_{\min} p_{\max}, \Delta$) to satisfy
$p_{\max} C_* c_0 \alpha^{d}\le \displaystyle{\frac{\lambda_0}{12 \widetilde L} }$. Consider the random event 
\begin{equation*}
    \mathcal{A} = \Big\{ \sup_{\substack{x,x' \in \Bd:\\ \| x - x'\| \leq \varepsilon}}\sum_{i=1}^n \mathbf{1} \big(X_i \in \widetilde\Delta(x,x',h\Delta)\big) \leq A\Big\},
\end{equation*}
where 
$A=\displaystyle{\frac{\lambda_0}{6\widetilde L} nh^d}$. Due to the choice of $c_0$ and the fact that $h\le \alpha$ the bound $\matP(X_1\in \Delta(x,x',h\Delta)) \le A/2$ holds whenever $\|x-x'\| \leq \varepsilon$. Hence, 
\begin{align}\label{eq:pp}
    \matP\big(\overline{\mathcal{A}}\big) &\le
    \matP\Big\{ \sup_{\substack{x,x' \in \Bd:\\ \| x - x'\| \leq \varepsilon}}\Big|\frac1n \sum_{i=1}^n \mathbf{1} \big(X_i \in \widetilde\Delta(x,x',h\Delta)\big)-\matP\big(X_1\in \Delta(x,x',h\Delta)\big)\Big| \geq A/2\Big\}.
\end{align}
The class of all balls in $\matR^d$ has a VC-dimension at most $d+2$, cf. Corollary 13.2 in \cite{devroye-gyorfi-lugosi}. Consequently, the class of all intersections of two balls in $\matR^d$  has a VC-dimension at most $Cd$
where $C>0$ is an absolute constant \cite{vdVaart-Wellner}. This allows us to apply the Vapnik-Chervonenkis inequality to bound the probability in \eqref{eq:pp}. Indeed, we can use the decomposition  
\begin{align}\nonumber
\mathbf{1} \big(X_i \in \widetilde\Delta(x,x',h\Delta)\big)&=\mathbf{1} \big(X_i \in \mathcal{B}_d(x,h\Delta) \big) + \mathbf{1} \big(X_i \in \mathcal{B}_d(x',h\Delta) \big)
\\
& \quad - 2 \cdot \mathbf{1} \big(X_i \in \mathcal{B}_d(x,h\Delta) \cap \mathcal{B}_d(x',h\Delta) \big)\label{eq:3terms}
\end{align}
and bound from above the probability in \eqref{eq:pp} by the three probabilities corresponding to the three terms on the right hand side of \eqref{eq:3terms}.
Applying the Vapnik-Chervonenkis inequality \cite[Theorem 12.5]{devroye-gyorfi-lugosi} to each of these probabilities we get
\begin{align*}
    \matP\big(\overline{\mathcal{A}}\big) &\le c_3
    n^{c_3} \exp(- nA^2/128)\le c_3 n^{c_3} \exp(- c_4 n^3h^{2d}),
\end{align*}
where $c_3>0, c_4>0$ are constants depending only on $d,\ell, p(\cdot),\Delta$. On the other hand, due to \eqref{eq:eq13a} and the definitions of $\varepsilon$  and $A$, on the event $\mathcal{A}$ we have
\begin{align*}
\sup_{\substack{x,x' \in \Bd:\\ \| x - x'\| \leq \varepsilon}}
    \|\overline{B}_{nx} - \overline{B}_{nx'}\|_\infty \leq  \frac{\lambda_{0}}{3}.
\end{align*}
Thus, we have proved that
\begin{equation}\label{eq:P2}
    P_3\le c_3n^{c_3} \exp(- c_4 n^3h^{2d}).
\end{equation}

{\it Control of $P_1$.} 
Fix $x \in \Bd$ and  let $k \in \{1,\dots,N\}$ be such that $\| x - x_k\| \leq \varepsilon$. Using \eqref{eq:V_Lipschitz} we obtain 
\begin{align*}
    \|\overline{B}(x) - \overline{B}(x_k)\|_\infty &\leq \frac{1}{h^d}\int_{\matR^d} \left\|V\left(\frac{z-x}{h}\right)- V\left(\frac{z-x_k}{h}\right)\right\|_\infty p(z)dz 
    \\
    & \leq \frac{\widetilde L}{h^d} \int_{\matR^d}\Big[\frac{\epsilon}{h} + \mathbf{1}(z \in \widetilde\Delta\big(x,x_k,h\Delta)\big)\Big] p(z)dz 
    \\
    & \leq \widetilde L \varepsilon \left(\frac{1}{h^{d+1}} + \frac{C_*p_{\max}}{h}\right) ~~ \text{ (since $\big|\widetilde \Delta(x,x_k,h\Delta)\big| \leq C_* h^{d-1} \varepsilon$)}
    \\
    &= \widetilde L c_0 \left(1 + C_*p_{\max} h^d\right) \le \widetilde L c_0 \left(1 + C_* p_{\max}\alpha^d\right) < \frac{\lambda_0}{3}
\end{align*}
provided that $c_0$ is chosen small enough (depending only on $\ell, d, p(\cdot), \Delta,\alpha$). Thus, $P_1=0$ under this choice of $c_0$. Combining this remark with \eqref{eq:proofL4:2}, \eqref{eq:eq12a} and \eqref{eq:P2} we conclude that
\begin{align*}
\matP \left(\inf_{x \in \text{Supp}(p)} \lmin(\overline B_{nx}) < \lambda_0\right) 
 \leq 
    2 N C_{\ell,d}^2 \exp(-c_2n h^d) + c_3 n^{c_3} \exp(- c_4 n^3h^{2d}).
\end{align*}
Recall that the cardinality $N$ of the minimal $\varepsilon$-net on the ball $\Bd=\Bd (0,1)$ satisfies $N \leq \left(\frac{2}{\varepsilon} + 1\right)^d$. The result of the lemma now follows by observing that under our choice of $\varepsilon$ we have $N \leq  C h^{-d^2-d}$, where the constant $C>0$  depends only on $\ell, d, p(\cdot), \Delta,\alpha$.
\end{proof}
\hfill

In the proof of Theorem \ref{thm:thm1} below, we will use the fact that an LP($\ell$) estimator reproduces the polynomials of degree $\leq \ell$ for all $x \in \matR^d$ such that $B_{nx} \succ0$. We state this property in the next proposition. The proof is omitted. It follows the same lines as the proof of Proposition 1.12 in \cite{tsybakov2008introduction} dealing with the case $d=1$. 
\begin{prop}
\label{prop:prop1}
Let $x \in \matR^d$ such that $B_{nx} \succ 0$ and let $Q$ be a polynomial of degree $\leq \ell$. Then the LP($\ell$) weights $W_{ni}$ are such that
\begin{align*}
    \sum_{i=1}^n Q(X_i) W_{ni}(x) = Q(x).
\end{align*}
In particular,
\begin{align}\label{eq:prop1}
    \sum_{i=1}^n W_{ni}(x) = 1 \text{ and } \sum_{i=1}^n (X_i - x)^k W_{ni}(x) = 0 \text{ for } |k| \leq \ell.
\end{align}
\end{prop}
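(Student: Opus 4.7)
The plan is to exploit the exact representability of polynomials of degree $\le \ell$ in the basis $U(\cdot)$, together with the uniqueness of $\hat\theta_n(x)$ when $B_{nx}\succ 0$.

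First, I would show that for any polynomial $Q$ of degree $\le \ell$, there is a vector $\theta^\star \in \matR^{C_{\ell,d}}$ such that
\[
Q(y) = (\theta^\star)^\top U\!\left(\frac{y-x}{h}\right) \qquad \forall y \in \matR^d.
\]
This follows by expanding $Q(y) = Q(x+(y-x))$ via its (exact) Taylor formula at $x$:
\[
Q(y) = \sum_{|s|\le \ell} \frac{D^s Q(x)}{s!}(y-x)^s = \sum_{|s|\le \ell} \bigl(h^{|s|} D^s Q(x)\bigr)\cdot \frac{((y-x)/h)^s}{s!}.
\]
Reading off the coefficients, we see that the choice $\theta^\star_{s} = h^{|s|} D^s Q(x)$ does the job. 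Crucially, the component of $\theta^\star$ corresponding to $s = (0,\dots,0)$ equals $Q(x)$, i.e.\ $U^\top(0)\theta^\star = Q(x)$.

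Next, I would apply this to the artificial data $Y_i := Q(X_i)$ in the optimization problem \eqref{eq:eq1}. With $\theta = \theta^\star$, every residual $Y_i - (\theta^\star)^\top U((X_i-x)/h)$ vanishes, so $\theta^\star$ attains the minimum value $0$ of the weighted least-squares criterion. Since $B_{nx} \succ 0$, the quadratic form defining the criterion is strictly convex, hence the minimizer is unique: $\hat\theta_n(x) = \theta^\star$. The weight representation of the LP($\ell$) estimator gives
\[
\sum_{i=1}^n Q(X_i)\, W_{ni}(x) \;=\; U^\top(0)\, \hat\theta_n(x) \;=\; U^\top(0)\, \theta^\star \;=\; Q(x),
\]
which is the desired identity. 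Note that this linear-in-$Y$ relation holds for any values $Y_i$; we only substituted $Y_i = Q(X_i)$ as a bookkeeping device to identify the first coordinate of $\hat\theta_n(x)$ via uniqueness.

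The two displayed consequences in \eqref{eq:prop1} then follow instantly: taking $Q \equiv 1$ yields $\sum_i W_{ni}(x) = 1$, and for any multi-index $k$ with $1 \le |k| \le \ell$, the polynomial $Q(y) := (y-x)^k$ has degree $|k|\le \ell$ and vanishes at $y=x$, so $\sum_i (X_i-x)^k W_{ni}(x) = Q(x) = 0$. There is no genuine obstacle in this argument; the only point that must be handled carefully is the existence of $\theta^\star$, where one must correctly match the scaling factor $h^{|s|}$ between $D^s Q(x)$ and the components of $U((y-x)/h)$. Once that bookkeeping is in place, the result is a direct consequence of strict convexity of the weighted least-squares objective.
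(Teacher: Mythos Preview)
Your argument is correct and is exactly the standard proof the paper alludes to (the one-dimensional version in \cite{tsybakov2008introduction}, Proposition~1.12): represent $Q$ exactly in the shifted and scaled basis $U((\cdot-x)/h)$, observe that the corresponding $\theta^\star$ zeroes all residuals, and conclude by uniqueness of the minimizer since $B_{nx}\succ 0$ makes the weighted least-squares criterion strictly convex. The bookkeeping with the $h^{|s|}$ scaling and the identification $U^\top(0)\theta^\star=Q(x)$ is handled correctly.
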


\bigskip

\begin{proof}[Proof of Theorem \ref{thm:thm1}]
Part (ii) of the theorem follows from Corollary \ref{cor:interpolation}.
Also, note that \eqref{eq:eq11} is an immediate consequence of  \eqref{eq:eq10} and Assumption (A2). 
Therefore, we need only to prove \eqref{eq:eq10}. 

Fix $x\in \text{Supp}(p)$ and 
define the random events $\mathcal{E}_0=\big\{x\not \in  \{X_1,\dots,X_n\}\big\}$, and
\begin{align*}
    &\mathcal{E} = \left\{ \lmin(B_{nx}) \geq \lambda_0' \right\}\cap \mathcal{E}_0.
\end{align*}
where $\lambda_0'=\lambda_0'(\ell)$ is a constant from Lemma \ref{lem:lem1} that does not depend on $n$ and $x$. From Assumption (A2) we get that $\matP(\mathcal{E}_0)=1$. This and Lemma \ref{lem:lem1} with our choice of $h$ yield: 
\begin{equation}\label{eq:probaE}
    \matP(\overline{\mathcal{E}})\le c' e^{- A_n/c'}, 
\end{equation}
where $A_n=n^{\frac{2\beta}{2\beta+d}}$ and $c'>0$ does not depend on $x$ and $n$. 

Since $|\bar f_n(x)|\le \mu = \max_{1\le i\le n}|Y_i| 
\lor L_0$
we obtain
\begin{align*}
    \matE\left(\left[ \bar f_n(x) - f(x)\right]^2 \right) &
    \leq \matE\big(\left[ \bar f_n(x) - f(x)\right]^2 \mathbf{1}({ \mathcal{E}}) \big) + \matE\big([L_0 + \mu]^2  \mathbf{1}(\overline{\mathcal{E}})\big) 
    \\
    &
    \leq \matE\big(\left[ f_n(x) - f(x)\right]^2 \mathbf{1}({\mathcal{E}}) \big) + \matE\big([L_0 + \mu]^{2+\delta}\big)^{\frac{2}{2+\delta}} \matP(\overline{\mathcal{E}})^{\frac{\delta}{2+\delta}}, 
\end{align*}
where we have used Hölder's inequality and the fact that  $|\bar f_n(x)-f(x)| \leq | f_n(x)-f(x)|$ for all $x \in \text{Supp}(p)$.
Next, 
\begin{align*}
    \matE\big([L_0 + \mu]^{2+\delta}\big)\le 
    \matE\big([{2L_0 + \max_{1\le i\le n}|\xi(X_i)|}]^{2+\delta}\big)\le C\big[1+ n \matE\big( |\xi(X_1)|^{2+\delta}\big)\big].
\end{align*}
Using this inequality and Assumption (A1) we get
\begin{align}\label{eq:a1}
    \matE\left(\left[ \bar f_n(x) - f(x)\right]^2 \right) 
    &
    \leq \matE\big(\left[ f_n(x) - f(x)\right]^2 \mathbf{1}({\mathcal{E}}) \big) + C n^{\frac{2}{2+\delta}} \matP(\overline{\mathcal{E}})^{\frac{\delta}{2+\delta}}, 
\end{align}
We now bound the main term $\matE\big(\left[ f_n(x) - f(x)\right]^2 \mathbf{1}(\mathcal{E}) \big)$ on the right hand side of~\eqref{eq:a1}. Writing for brevity $\matE[\cdot|X_1,\dots,X_n]=\widetilde \matE[\cdot]$ we have
\begin{align}\label{eq:a2}\nonumber
    \matE\left(\left[ f_n(x) - f(x)\right]^2 \mathbf{1}(\mathcal{E}) \right) 
    & \leq 2\matE \left( \left(f_n(x) - \widetilde\matE[f_n(x)]\right)^2 \mathbf{1}(\mathcal{E}) \right)
    \\
    & \quad + 2\matE \left( \left(\widetilde\matE[f_n(x)] - f(x)\right)^2 \mathbf{1}(\mathcal{E}) \right).
\end{align}    
We analyze separately the two terms (bias and variance terms) on the right hand side of \eqref{eq:a2}.  

{\it Bound on the variance term.} On the event $\mathcal{E}$ we have
\begin{align*}
    \widetilde \matE [ f_n(x)] = \sum_{i=1}^n f(X_i) W_{ni}(x),
\end{align*}
where
$$
W_{ni}(x) = \frac{1}{n{ h^d}} U^\top(0) B^{-1}_{nx}U\left(\frac{X_i-x}{h}\right)K\left(\frac{X_i-x}{h}\right).
$$
Thus, using Assumption (A1) the variance term can be bounded as follows:
\begin{align*}
    \matE \left( \left( f_n(x) - \widetilde\matE[ f_n(x)]\right)^2 \mathbf{1}(\mathcal{E}) \right) &= \matE \left( \left( \sum_{i=1}^n  \xi(X_i) W_{ni}(x)\right)^2 \mathbf{1}(\mathcal{E}) \right) \\
    & = \matE\left(\sum_{i=1}^n \matE\left[ \xi^2(X_i)\big| X_i\right] W_{ni}^2(x) \mathbf{1}(\mathcal{E})\right) 
    \leq C \sigma^2(x),
\end{align*}
where
\begin{align*}
    \sigma^2(x) = \matE \left( \sum_{i=1}^n W_{ni}^2(x)\mathbf{1}(\mathcal{E})\right).
\end{align*}
In what follows, we assume w.l.o.g. that $\text{Supp}(K)\subseteq \Bd$.
On the event $\mathcal{E}$, we have  $\|B^{-1}_{nx}v\| \leq \|v\|/\lambda_0'$ for any $v \in \matR^{C_{\ell,d}}$. This inequality and the fact that $\|U(0)\|=1$ imply 
\begin{align*}
    \rvert W_{ni}(x)\rvert &\leq \frac{1}{n{ h^d}} \left\|B^{-1}_{nx}U\left(\frac{X_i-x}{h}\right)K\left(\frac{X_i-x}{h}\right) \right\| 
    \\
    &\leq \frac{1}{n{ h^d}\lambda_0'}\left\| U\left(\frac{X_i-x}{h}\right) \right\| K\left(\frac{X_i-x}{h} \right) 
    \\
    &\leq \frac{1}{n{ h^d}\lambda_0'}K\left(\frac{X_i-x}{h}\right)\sqrt{ \sum_{0\le |s|\leq \ell} \frac{1}{(s!)^2}} 
    \qquad \text{(since $\text{Supp}(K)\subseteq \Bd$)}
    \\
    &{\leq} \frac{c_5}{n{ h^d}}K\left(\frac{X_i-x}{h}\right) = :\zeta_i,
\end{align*}
where $c_5>0$ is a constant that does not depend on $n$ and $x$. Using Assumption (A2) and the compactness of the support of $K$ we get 
\begin{align}\label{eq:a4}
    &\matE(\zeta_1^2) \leq \frac{c_5^2 p_{\max}}{n^2h^d}\int K^2(u)\mathrm{d}u \leq \frac{C}{n^2h^d}, 
    \\
    &\matE(\zeta_1) \leq \frac{c_5 p_{\max}}{n}\int K(u)\mathrm{d}u \leq \frac{C}{n} \left(\int K^2(u)\mathrm{d}u \right)^{1/2} \leq \frac{C}{n}.\label{eq:a5}
\end{align}
It follows that
\begin{align*}
    \sigma^2(x) \leq \matE\left(\sum_{i=1}^n \zeta_i^2\right) \leq \frac{C}{nh^d}
\end{align*}
and
\begin{align}\label{eq:a3}
\matE \left( \left( f_n(x) - \widetilde\matE[ f_n(x)]\right)^2 \mathbf{1}(\mathcal{E}) \right) & \leq \frac{C}{nh^d}.
\end{align}

{\it Bound on the bias term.} On the event $\mathcal{E}$ we have 
\begin{align*}
    \widetilde \matE[ f_n(x)] - f(x) &= \sum_{i=1}^n f(X_i)W_{ni}(x) - f(x) \\
    &= \sum_{i=1}^n [f(X_i)-f(x)]W_{ni}(x),
\end{align*}
so that the bias term in \eqref{eq:a2} can be written as
\begin{align*}
    \matE \left( \left( \widetilde \matE[ f_n(x)] - f(x)\right)^2 \mathbf{1}(\mathcal{E}) \right) = \matE \left(\left[\sum_{i=1}^n [f(X_i) - f(x)]W_{ni}(x) \right]^2 \mathbf{1}(\mathcal{E}) \right) = : b^2(x). 
\end{align*}
Using \eqref{eq:prop1} and the Taylor expansion of $f$ we get that for some $\tau_i \in [0,1]$,
\begin{align*}
    \sum_{i=1}^n [f(X_i)-f(x)]W_{ni}(x) = \sum_{i=1}^n \sum_{\rvert k \rvert = \ell} \frac{D^kf(x+\tau_i(X_i-x))}{k!}(X_i-x)^k W_{ni}(x) 
    \\
    = \sum_{i=1}^n \sum_{\rvert k \rvert = \ell} \frac{(D^kf(x+\tau_i(X_i-x))-D^kf(x)}{k!}(X_i-x)^k W_{ni}(x).
\end{align*}
Since $f$ belongs to $\Sigma(\beta,L)$ we can apply \eqref{eq:lem2}, which yields
\begin{align*}
    b^2(x)  &\leq \matE\left[\left(\sum_{i=1}^n{\frac{L}{\ell!} \|X_i\!-\!x \|^{\beta}}\rvert W_{ni}(x)\rvert\right)^2 \mathbf{1}(\mathcal{E}) \right]  \\
    &= \matE\left[\left(\sum_{i=1}^n{\frac{L}{\ell!} \|X_i\!-\!x \|^{\beta}}\,\rvert W_{ni}(x)\rvert\, \mathbf{1}(\|X_i-x\| \!\leq\! h)\!\right)^2 \hspace{-2mm} \mathbf{1}(\mathcal{E}) \right] \text{ (as supp}(K) \subset \Bd)\\
    &\leq \matE \left[\left(\sum_{i=1}^n {\frac{L}{\ell!}  h^{\beta}}\rvert W_{ni}(x) \rvert \right)^2 \mathbf{1}(\mathcal{E}) \right].
\end{align*}
As $| W_{ni}(x)|\le \zeta_i$ we further get 
\begin{align*}
    b^2(x) &\leq Ch^{2\beta}\matE\left[\left(\sum_{i=1}^n \zeta_i \right)^2\right]
    =
    C h^{2\beta} \left[ \sum_{i=1}^n \matE(\zeta_i^2)+ \sum_{i\ne j}^n \matE(\zeta_i)\matE(\zeta_j)\right]
     \\
    & =  C h^{2\beta} \left[ n \matE(\zeta_1^2)+n(n-1)\matE(\zeta_1)^2\right]
    \le C h^{2\beta},
\end{align*}
where the last inequality follows from \eqref{eq:a4}, \eqref{eq:a5} and the fact that $h=\alpha n^{-\frac{1}{2\beta+d}}$. Combining this bound on $b^2(x)$ with \eqref{eq:probaE}, \eqref{eq:a1}, \eqref{eq:a2} and \eqref{eq:a3} we finally obtain 
\begin{align*}    
    \matE\left(\left[\bar f_n(x) - f(x)\right]^2 \right)&\leq 
    C\left(\frac{1}{nh^d}+h^{2\beta}+ n^{\frac{2}{2+\delta}} e^{- n^a/C}
    \right),
\end{align*}
where $a=\frac{2\beta}{2\beta+d}$.
Since $h=\alpha n^{-\frac{1}{2\beta+d}}$ the desired bound \eqref{eq:eq10} follows. 
\end{proof}

\hfill

\begin{proof}[Proof of Theorem \ref{thm:thm3}]
If $K$ satisfies the assumptions of Theorem \ref{thm:thm1}(ii) then each estimator $\bar f_{n,j}$ is interpolating on $\mathcal{D}_1$ with probability at least $$1 - C\exp(-n^{-{2\beta_j}/(2\beta_j+d)}/C)\ge 1 - C\exp(-n^{-\frac{2}{2+d}}/C)$$ if $\beta_j>1$, and with probability 1 if $0<\beta_j\le 1$. Hence all of them are simultaneously interpolating with probability at least $$1 - C M_{\max}\exp(-n^{-\frac{2}{2+d}}/C)\geq 1 - C'\exp(-n^{-\frac{2}{2+d}}/C'),$$
and the same holds true for the estimator $\widetilde f_n$. Analogously, the estimator $\widetilde g_n$ is  interpolating on $\mathcal{D}_2$ with the same probability. These remarks and the definition of $\hat {\sf f}_n$ in \eqref{def:final_estimator} ensure that $\hat {\sf f}_n$ is interpolating on the whole sample $\mathcal{D}$ with probability at least $1 - 2 C'\exp(-n^{-\frac{2}{2+d}}/C')$.

We now prove the bound \eqref{eq:thm2}. First, we show that such a bound holds for the estimator $\widetilde f_n$.  Set $B \!=\! L_0 + \mu$. Then $\|\bar{f}_{n,j} - f \|_\infty \leq B$ for all $j\!=\!-M, \dots, M_{\max}$,
where $\|\cdot \|_\infty$ denotes the $L_\infty$-norm on ${\rm Supp}(p)$. Fix the subsample $\mathcal{D}_1$. Then $\bar{f}_{n,j}$'s become fixed functions, and applying Theorem 2.1 in \cite{wegkamp2003model} with $a\!=\!1$, $\lambda_j \!=\! 0, ~ \forall j\!=\!-M, \dots, M_{\max},$ and $K \!=\! M+M_{\max} + 1 \leq C \log^2(n)$, we get 
\begin{align}\label{eq:wegkamp}
    \matE_2\left[\|\widetilde f_n - f\|_{L_2}^2  \right] \leq 2 \min_{-M \leq j \leq M_{\max}}  \|\bar f_{n,j} - f\|_{L_2}^2  + \frac{C(B^2 \log \log n + \log^2 (n))}{n},
\end{align}
where we denote by $\matE_2$  the expectation over the distribution of the sample $\mathcal{D}_2$, and we have used the fact that $M_{\max}\le M$.
Note that under Assumption (A3) we have $\matE_1 (B^2) \leq C \log n$ (see, e.g., Lemma 1.6 in \cite{tsybakov2008introduction}). Therefore, taking the expectations over  $\mathcal{D}_1$ on both sides of \eqref{eq:wegkamp} we get 
\begin{align}
    \matE_1\matE_2\left[\|\widetilde f_n - f\|_{L_2}^2\right] \leq 2 \min_{-M \leq j \leq M_{\max}} \matE_1 \left[\|\bar f_{n,j} - f\|_{L_2}^2\right] +  C \frac{\log^2 (n) \log \log n}{n}.
\label{eq:eq8}
\end{align}
Assume now that $\beta \in [\beta_j, \beta_{j+1}]$ for some $j \in \{-M, \dots, M_{\max}-1\}$. Lemma \ref{lem:Holder_nested} implies that $\Sigma(\beta,L) \subseteq \Sigma(\beta_j, 2L)$. Hence, using
\eqref{eq:eq6}, we obtain:
\begin{align*}
    \sup_{f \in \Sigma(\beta,L)\cap \mathcal{F}_0} \matE_1\left[ \|\bar f_{n,j}-f\|_{L_2}^2\right] &\leq \sup_{f \in \Sigma(\beta_j,2L)\cap \mathcal{F}_0} \matE_1\left[ \|\bar f_{n,j}-f\|_{L_2}^2\right] \leq Cn^{-\frac{2\beta_j}{2\beta_j+d}}. \numberthis \label{eq:eq9}
\end{align*}
Combining \eqref{eq:eq8} and \eqref{eq:eq9} we get that, for $\beta \in [\beta_j, \beta_{j+1}]$,
\begin{align}
    \sup_{f \in \Sigma(\beta,L)\cap \mathcal{F}_0}\matE_1\matE_2\left[\|\widetilde{f}_n-f\|_{L_2}^2\right] &\leq  C n^{-\frac{2\beta_j}{2\beta_j+d}}.  \label{eq:eq10x}
\end{align}
 Notice that if $\beta \in [\beta_j, \beta_{j+1}]$ for some $j \in \{-M, \dots, M_{\max}-1\}$ then 
$$ n^{-\frac{2\beta_j}{2\beta_j+d}}\le e
n^{-\frac{2\beta}{2\beta+d}}. $$ 
Indeed,
\begin{align*}
    \frac{\beta}{2\beta+d} - \frac{\beta_j}{2\beta_j+d} &\leq \frac{\beta_{j+1} - \beta_j}{(2\beta+d)(2\beta_j + d)} 
     = \frac{\beta_j}{(2\beta_j+d)(2\beta + d) \log n}\\
     &\leq \frac{\beta}{(2\beta_j+d)(2\beta + d) \log n} \leq \frac{1}{2 \log n}.
\end{align*}
The case $\beta \in [\beta_{M_{\max}},\betamax]$ is treated analogously. These remarks and \eqref{eq:eq10x} imply thta for each $\beta \in [\beta_{-M},\betamax]$ there exists a constant $C>0$ such that 
\begin{align}\label{eq:a6}
    \sup_{f \in \Sigma(\beta,L)\cap \mathcal{F}_0}\matE \left[\|\widetilde{f}_n-f\|_{L_2}^2\right] \leq C n^{-\frac{2\beta}{2\beta + d}}.
\end{align}
Next, recalling the definition of $M$ and $\beta_{-M}$ as functions of $n$ we note that for any fixed $\beta>0$ it is possible to have $\beta<\beta_{-M}$ only for $n$ not exceeding some finite number $n_0(\beta)$. For such values of $n$ the estimation error of  $\widetilde{f}_n$ is bounded by a constant depending only on $\beta$, $d$ and $L_0$:
\begin{align*}
    \matE \big[\|\widetilde{f}_n - f\|_{L_2}^2\big]  \leq 4 \matE_1 \big[\max_{i=1,\dots,n_0(\beta)/2} Y_i^2\big] + 2L_0^2 \leq C (\log(n_0(\beta))+ L_0^2).
\end{align*}
Consequently, \eqref{eq:a6} also holds for $0<\beta<\beta_{-M}$ (and thus for all $\beta\in(0,\betamax]$) if we take the constant  $C>0$ in \eqref{eq:a6} large enough. 

By the same argument, we deduce that the bound   \eqref{eq:a6} holds for the estimator $\widetilde g_n$. Combining both bounds and using the fact that function $\lambda(\cdot)$ appearing in \eqref{def:final_estimator} takes values in $[0,1]$ we get the desired bound \eqref{eq:thm2} for the final estimator $\hat{\sf f}_n$. 
\end{proof}

\bibliographystyle{apalike}

\begin{thebibliography}{}

\bibitem[Audibert and Tsybakov, 2007]{audibert2007fast}
Audibert, J.-Y. and Tsybakov, A.~B. (2007).
\newblock Fast learning rates for plug-in classifiers.
\newblock {\em The Annals of Statistics}, 35(2):608--633.

\bibitem[Bartlett and Long, 2021]{bartlett2021failures}
Bartlett, P.~L. and Long, P.~M. (2021).
\newblock Failures of model-dependent generalization bounds for least-norm
  interpolation.
\newblock {\em Journal of Machine Learning Research}, 22(204):1--15.

\bibitem[Bartlett et~al., 2020]{bartlett2020benign}
Bartlett, P.~L., Long, P.~M., Lugosi, G., and Tsigler, A. (2020).
\newblock Benign overfitting in linear regression.
\newblock {\em Proceedings of the National Academy of Sciences},
  117(48):30063--30070.

\bibitem[Belkin, 2021]{belkin2021fit}
Belkin, M. (2021).
\newblock Fit without fear: Remarkable mathematical phenomena of deep learning
  through the prism of interpolation.
\newblock {\em Acta Numerica}, 30:203--248.

\bibitem[Belkin et~al., 2019a]{belkin2019reconciling}
Belkin, M., Hsu, D., Ma, S., and Mandal, S. (2019a).
\newblock Reconciling modern machine-learning practice and the classical
  bias--variance trade-off.
\newblock {\em Proceedings of the National Academy of Sciences},
  116(32):15849--15854.

\bibitem[Belkin et~al., 2018a]{belkin2018overfitting}
Belkin, M., Hsu, D.~J., and Mitra, P. (2018a).
\newblock Overfitting or perfect fitting? {R}isk bounds for classification and
  regression rules that interpolate.
\newblock {\em Advances in Neural Information Processing Systems}, 31.

\bibitem[Belkin et~al., 2018b]{belkin2018understand}
Belkin, M., Ma, S., and Mandal, S. (2018b).
\newblock To understand deep learning we need to understand kernel learning.
\newblock In {\em International Conference on Machine Learning}, pages
  541--549. PMLR.

\bibitem[Belkin et~al., 2018c]{belkin2018does-bis}
Belkin, M., Rakhlin, A., and Tsybakov, A.~B. (2018c).
\newblock Does data interpolation contradict statistical optimality?
\newblock {\em Oberwolfach Reports}, 15(2):1776--1779.

\bibitem[Belkin et~al., 2019b]{belkin2018does}
Belkin, M., Rakhlin, A., and Tsybakov, A.~B. (2019b).
\newblock Does data interpolation contradict statistical optimality?
\newblock In {\em Proceedings of AISTATS-2019}, volume~89, pages 1611--1619.
  PMLR.

\bibitem[Chinot and Lerasle, 2020]{chinot2020robustness}
Chinot, G. and Lerasle, M. (2020).
\newblock On the robustness of the minimum $\ell_2 $ interpolator.
\newblock {\em arXiv preprint arXiv:2003.05838}.

\bibitem[Devroye et~al., 1998]{devroye1998hilbert}
Devroye, L., Gy{\"o}rfi, L., and Krzy{\.z}ak, A. (1998).
\newblock The {H}ilbert kernel regression estimate.
\newblock {\em Journal of Multivariate Analysis}, 65(2):209--227.

\bibitem[Devroye et~al., 1996]{devroye-gyorfi-lugosi}
Devroye, L., Györfi, L., and Lugosi, G. (1996).
\newblock {\em A Probabilistic Theory of Pattern Recognition}.
\newblock Springer, NY e.a.

\bibitem[Fan and Gijbels, 1996]{fan-gijbels}
Fan, J. and Gijbels, I. (1996).
\newblock {\em Local Polynomial Modelling and its Applications}.
\newblock Chapman and Hall, NY.

\bibitem[Katkovnik, 1985]{katkovnik_book}
Katkovnik, V.~Y. (1985).
\newblock {\em Nonparametric Identification and Data Smoothing}.
\newblock Nauka, Moscow (in Russian).

\bibitem[Lancaster and Salkauskas, 1981]{lancaster}
Lancaster, P. and Salkauskas, K. (1981).
\newblock Surfaces generated by moving least squares methods.
\newblock {\em Mathematics of Computation}, 37(155):141--158.

\bibitem[Lecué and Shang, 2022]{lecue2022}
Lecué, G. and Shang, Z. (2022).
\newblock A geometrical viewpoint on the benign overfitting property of the
  minimum $l_2$-norm interpolant estimator.
\newblock {\em arXiv preprint arXiv:2203.05873}.

\bibitem[Liang and Rakhlin, 2020]{liang2020just}
Liang, T. and Rakhlin, A. (2020).
\newblock Just interpolate: Kernel “ridgeless” regression can generalize.
\newblock {\em The Annals of Statistics}, 48(3):1329--1347.

\bibitem[Liang et~al., 2020]{liang2020multiple}
Liang, T., Rakhlin, A., and Zhai, X. (2020).
\newblock On the multiple descent of minimum-norm interpolants and restricted
  lower isometry of kernels.
\newblock In {\em Conference on Learning Theory}, pages 2683--2711. PMLR.

\bibitem[Muthukumar et~al., 2020]{muthukumar2020harmless}
Muthukumar, V., Vodrahalli, K., Subramanian, V., and Sahai, A. (2020).
\newblock Harmless interpolation of noisy data in regression.
\newblock {\em IEEE Journal on Selected Areas in Information Theory},
  1(1):67--83.

\bibitem[Rakhlin and Zhai, 2019]{rakhlin2019consistency}
Rakhlin, A. and Zhai, X. (2019).
\newblock Consistency of interpolation with {L}aplace kernels is a
  high-dimensional phenomenon.
\newblock In {\em Conference on Learning Theory}, pages 2595--2623. PMLR.

\bibitem[Shepard, 1968]{shepard1968}
Shepard, D. (1968).
\newblock A two-dimensional interpolation function for irregularly-spaced data.
\newblock In {\em Proceedings of the 1968 23rd ACM national conference}, pages
  517--524. ACM.

\bibitem[Stone, 1980]{stone1980}
Stone, C.~J. (1980).
\newblock Optimal rates of convergence for nonparametric estimators.
\newblock {\em The Annals of Statistics}, 8:1348--1360.

\bibitem[Stone, 1982]{stone1982optimal}
Stone, C.~J. (1982).
\newblock Optimal global rates of convergence for nonparametric regression.
\newblock {\em The Annals of Statistics}, 10:1040--1053.

\bibitem[Tsigler and Bartlett, 2020]{tsigler2020benign}
Tsigler, A. and Bartlett, P.~L. (2020).
\newblock Benign overfitting in ridge regression.
\newblock {\em arXiv preprint arXiv:2009.14286}.

\bibitem[Tsybakov, 1986]{tsybakov1986}
Tsybakov, A.~B. (1986).
\newblock Robust reconstruction of functions by the local-approximation method.
\newblock {\em Problems of Information Transmission}, 22:133--146.

\bibitem[Tsybakov, 2008]{tsybakov2008introduction}
Tsybakov, A.~B. (2008).
\newblock {\em Introduction to Nonparametric Estimation}.
\newblock Springer, NY e.a.

\bibitem[van~der Vaart and Wellner, 2009]{vdVaart-Wellner}
van~der Vaart, A. and Wellner, J.~A. (2009).
\newblock A note on bounds for {VC} dimensions.
\newblock In {\em High Dimensional Probability}, volume~5, pages 103--107. IMS
  Collections.

\bibitem[Wegkamp, 2003]{wegkamp2003model}
Wegkamp, M. (2003).
\newblock Model selection in nonparametric regression.
\newblock {\em The Annals of Statistics}, 31(1):252--273.

\bibitem[Zhang et~al., 2021]{zhang2021understanding}
Zhang, C., Bengio, S., Hardt, M., Recht, B., and Vinyals, O. (2021).
\newblock Understanding deep learning (still) requires rethinking
  generalization.
\newblock {\em Communications of the ACM}, 64(3):107--115.

\end{thebibliography}

\end{document}